\theoremstyle{definition}
\newtheorem{Definition}{Definition}
\newtheorem{Lemma}{Lemma}
\newtheorem{Theorem}{Theorem}
\theoremstyle{Condition}
\newtheorem{Condition}{Condition}
\theoremstyle{remark}
\newtheorem{Remark}{Remark}
\newtheorem{Example}{Example}
\begin{document}

\title[On the strong uniqueness of a solution to a singular SDE]{On the strong uniqueness of a solution to singular stochastic differential equations}

\author{Olga V. Aryasova}
\address{Institute of Geophysics, National Academy of Sciences of Ukraine,
Palladin pr. 32, 03680, Kiev-142, Ukraine}
\email{oaryasova@mail.ru}

\author{Andrey Yu. Pilipenko}
\footnote{This work was partially supported by the State fund for fundamental
researches of Ukraine and the Russian foundation for basic research
under project F40.1/023.}
\address{Institute of Mathematics,  National Academy of Sciences of
Ukraine, Tereshchenkivska str. 3, 01601, Kiev, Ukraine}
\email{apilip@imath.kiev.ua}

\subjclass[2000]{60J65, 60H10}
 \dedicatory{}

\keywords{Singular SDE, strong uniqueness, martingale problem, local time}

\begin{abstract} We prove the existence and uniqueness of a strong solution  for an SDE on a semi-axis with singularities at the point $0$. The result obtained yields, for example, the strong uniqueness of non-negative solutions to SDEs governing Bessel processes.
\end{abstract}
\maketitle

\section*{Introduction}
We consider a stochastic process the state space of which is a non-negative semi-axis. Assume that up to the first hitting time of zero the  process $(x(t))_{t\geq0}$ satisfies an SDE
$$
x(t)=x_0+\int_0^t a(x(s))ds+\int_0^t \sigma(x(s))dw(s),
$$
where $x_0\geq0, a,\sigma$ are supposed to be locally Lipshitz
continuous on $(0,\infty)$, $(w(t))_{t\geq0}$ is a Wiener process.
Possible singularities of the coefficients generate different types of behavior of the process in a neighborhood of zero. As a consequence, the integral representation of $(x(t))_{t\geq0}$ may acquire various forms.

As an example let us consider  the following SDE
\begin{equation}\label{bessel}
\rho(t)=\rho(0)+w(t)+\frac{\beta-1}{2}\int_0^t\frac{1}{\rho(s)}ds, \ \rho(0)\geq0.
\end{equation}
       It is known that $\beta$-dimensional Bessel process with $\beta>1$ is a unique non-negative strong solution to (\ref{bessel}) (cf. \cite{Cherny00}). Note that this equation
possesses no additional terms. Otherwise,
an additional summand can be represented by the local time $(l(t))_{t\geq0}$ of unknown process $(x(t))_{t\geq0}$ at the point $0$ like in  Skorokhod equation
\begin{equation}\label{Skorokhod1}
x(t)={{x_0}}+\int_0^t a(x(s))ds+\int_0^t b(x(s))dw(s)+l(t),
\end{equation}
or by principal values of some functionals
of the unknown process like in the following representation for a
$\beta$-dimensional Bessel process with $\beta\in(0,1)$ (cf. \cite{Revuz+99}, Ch. XI)
\begin{equation}\label{bessel1}
\rho(t)=\rho(0)+w(t)+\frac{\beta-1}{2}k(t), \ \rho(0)\geq0.
\end{equation}
  Here $k(t)=V.P.\int_0^t\rho^{-1}(s)ds$ which, by definition, is equal to $\int_0^\infty a^{\beta-2}(L_a^\rho(t)-L_0^\rho(t))da$, $L_a^\rho(t)$ ia a local time of the process $\rho(t)$ at the point $a$.

It seems improbable to describe all possible forms of integral representations. Let $f$ be a twice continuously
differentiable function on $[0,\infty)$ which is a constant in a neighborhood of zero. Applying It$\hat{o}$  formula (additional tricks are needed in some cases) we see that the stochastic differential of the function $f(x(t))$ has identical form for solutions of equations (\ref{bessel})-(\ref{bessel1}). Namely,
\begin{multline}\label{main_eq}
f(x(t))=f(x_0)+\int_0^t\left(a(x(s))f'(x(s))+\frac{1}{2}\sigma^2(x(s))f''(x(s))\right)ds\\+
\int_0^t \sigma(x(s))f'(x(s))dw(s).
 \end{multline}
The differential has no singularities. Intuitively, the singularities at the point $0$ are killed by zero derivatives of the function $f$. We use this fact
to formulate the problem as an analogue of a
martingale problem
(see Section 1). The main result of this paper is as follows: the
existence of a weak non-negative solution for equation
(\ref{main_eq}) spending zero time at the point $0$ implies the
existence and uniqueness of a non-negative strong solution
spending zero time at $0$. The pathwise uniqueness is obtained by
method of Le Gall \cite{LeGall83} based on the fact that
the maximum of two solution also solves the equation. The
formulation of a martingale problem involving a class of functions
that are constant in a neighborhood of possible singularities was
used by many authors (see, for example, \cite{Ramanan06},
\cite{Varadhan+85}).

The notations and definitions used are collected in Section 1. We
prove the main Theorem in Section 2. In Section 3 some examples are
represented.

\section{Notations and definitions}
Let $a,\sigma$ be real-valued Borel-measurable functions defined
on $[0,\infty)$. From now on we assume that the following condition is valid
\begin{Condition} Suppose that the functions $a$ and $\sigma$ are locally Lipschitz continuous on $(0,\infty)$, i.e. for each $\varepsilon>0$ there exist
constants $C_{\varepsilon}>0$ such that for all $\{x,y\}\subset[\varepsilon,\infty)$
$$
|a(x)-a(y)|+|\sigma(x)-\sigma(y)|\leq C_{\varepsilon}|x-y|.
$$
\end{Condition}

The set of continuous functions
$x:[0,\infty)\rightarrow[0,\infty)$ is denoted by
$C^+([0,+\infty))$. Let $\mathfrak{G}_{t}\equiv\sigma\{x(s):0\leq
s\leq t, \ x\in C^+([0,+\infty)) \}$, and
$\mathfrak{G}\equiv\sigma\{x(s):0\leq s< \infty, \  x\in
C^+([0,+\infty)) \}$ be $\sigma$-algebras on  $C^+([0,+\infty))$.
The set of real-valued functions which are twice continuously
differentiable on $[0,\infty)$   and constant in a neighborhood of zero is denoted by
$C^2_c([0,+\infty)).$ Given a probability measure $P$ on $(C^+([0,+\infty)),\mathfrak{G})$, the family of continuous, square integrable
local $\mathfrak{G}_{t}$-martingales is denoted by $\mathcal{M}_2^{c,loc}(P).$

\begin{Definition} \label{mart_prob} Given $x_0 \geq0$, {\it a solution to the martingale problem  $M(a,\sigma,x_0)$}  is a probability measure $P_{x_0}$ on $(C^+([0,+\infty)),\mathfrak{G})$ such that
\begin{enumerate}[(i)]
\item $P_{x_0}(x(0)=x_0)=1.$
\item For each $f\in C^2_c([0,+\infty))$,
$$
Y_f(t)=f(x(t))-f(x_0)-\int_0^t\left[a(x(s))f'(x(s))+\frac{1}{2}\sigma^2(x(s))f''(x(s))\right]ds\in\mathcal{M}_2^{c,loc}(P_{x_0}).
$$
\item
$$
E^{P_{x_0}}\int_0^\infty\mathds{1}_{\{0\}}(x(s))ds=0.
$$
\end{enumerate}
\end{Definition}
\begin{Definition}\label{well-posed} The martingale problem is {\it well-posed} if for each $x_0\geq0$ there is exactly one solution
 to the martingale problem starting from $x_0$.
\end{Definition}

\begin{Definition}\label{Def_Weak_Sol} Given $x_0\geq0$, let a pair $\left(x(t),w(t)\right)_{t\geq0}$ of continuous adapted
processes on a filtered probability space $(\Omega,\mathfrak F,
\left( {\mathfrak F}_t\right), P)$  be such that
\begin{enumerate}[(i)]
\item $\left(w(t)\right)_{t\geq0}$ is a standard $({\mathfrak
    F}_t)$-Brownian motion,
    \item the process $(x(t))_{t\geq0}$ takes values on
        $[0,\infty)$,
    \item for each $t\geq 0$, and $f\in C^2_c([0,\infty))$,
        the equality
    \begin{multline} \label{f_martingale}
    f(x(t))=f(x_0)+\int_0^t\left(a(x(s))f'(x(s))+\frac{1}{2}\sigma^2(x(s))f''(x(s))\right)ds\\+
    \int_0^t \sigma(x(s))f'(x(s))dw(s)
    \end{multline}
    holds true $P$-a.s.

    Then the pair  $\left(x,w\right)$ is called {\it a weak
    solution to equation (\ref{f_martingale}) with initial
    condition $x_0$}.
\end{enumerate}
\end{Definition}
\begin{Remark}\label{integtal_existance}
Let $f\in C_c^2([0,\infty))$. Then there exists $\delta_f>0$ such that $f'=f''=0$ on $[0,\delta_f]$. This and Condition A ensure the existence of all the integrals on the right-hand side of (\ref{f_martingale}).
\end{Remark}
\begin{Remark}\label{equivalent}
It is not hard to verify that the existence of a weak solution
$(x(t),w(t))_{t\geq0}$ to equation (\ref{f_martingale}) on a
probability space $(\Omega,\mathfrak F, P)$ with initial condition
$x_0$ is equivalent to the  existence of  a probability measure
$\tilde P$ on some probability space
$(\tilde\Omega,\tilde{\mathfrak F}, \tilde P)$ satisfying
conditions (i) and (ii) of Definition \ref{mart_prob}. The process
$(x(t))_{t\geq0}$ induces the measure $\tilde P$ on
$(C^+([0,\infty)),\mathfrak{G}),$ namely  $\tilde P=Px^{-1}$.

For the proof see Appendix.
\end{Remark}

\begin{Definition}
{\it The  weak uniqueness} holds for equation (\ref{f_martingale})
if, for any two weak solutions $\left(x,w\right)$ and
$\left(\tilde x,\tilde w\right)$ (which may be defined on
different probability spaces) with a common initial value, i.e.
$x(0)=x_0$ $P$-a.s., $\tilde x(0)=x_0 \ \tilde P$-{a.s.}  the laws
of processes $x$ and $\tilde x$ coincide.\label{Def_Weak_Uniq}
\end{Definition}

\begin{Definition}
{\it The pathwise uniqueness} holds for equation
(\ref{f_martingale}), if for any two weak solutions
$\left(x,w\right)$ and $\left(\tilde x, w\right)$  on the same
probability space $(\Omega,\mathfrak F, P)$ with common Brownian
motion and common initial value, i.e. $x(0)=\tilde x(0)=x_0\ \
P$-a.s.,  the equality $x(t)=\tilde x(t), \ t\geq 0,$ fulfils
$P$-a.s.\label{Def_Path_Uniq}
\end{Definition}

Denote by  $\left(\overline{\mathfrak{F}}_t^w\right)$ the
filtration of $w$ completed with respect to $P$.
\begin{Definition} \label{Def_Str_Sol}
Given  a process $(w(t))_{t\geq0}$, and
$x_0\geq0$, we say that the process $(x(t))_{t\geq0}$ is {\it a
strong solution} to  equation (\ref{f_martingale}) with initial condition $x_0$  if
it is adapted to the filtration
$\left(\overline{\mathfrak{F}}_t^w\right)$ and  conditions (i)-(iii) of
Definition \ref{Def_Weak_Sol} hold.
\end{Definition}
\begin{Definition} \label{Def_Str_Unique}
{\it The strong uniqueness} holds for equation
(\ref{f_martingale}) if there exists a strong solution to equation
(\ref{f_martingale}) and the pathwise uniqueness is valid for
equation (\ref{f_martingale}).
\end{Definition}


\section {The main result}
Unfortunately, we are not able to write equation
(\ref{f_martingale}) for the process $(x(t))_{t\geq0}$ itself
because the function $f(x)=x$ does not belong to
$C_c^2([0,\infty))$. Instead, in the next Lemma we obtain an SDE
for the process $\zeta_\delta(x(t))=x(t)\vee\delta, \ t\geq0,$  which
will often be used in the sequel.
\begin{Lemma}\label{approxim}
{\it Given $\delta>0$, put $\zeta_\delta(x)=x\vee \delta, \ x\in[0,\infty).$
Suppose $(x(t))_{t\geq0}$ is a weak solution to equation
(\ref{f_martingale}). Then the equality
\begin{multline}\label{eta_delta_x}
\zeta_\delta(x(t))=\zeta_\delta(x(0))+\int_0^t a(x(s))\mathds{1}_{(\delta,+\infty)}(x(s))ds\\+
\int_0^t\sigma(x(s))\mathds{1}_{(\delta,+\infty)}(x(s))dw(s)+\frac{1}{2}L_\delta^{x}(t)
\end{multline}
is valid for all $t\geq0$.  Here  $(L_\delta^x(t))_{t\geq0}$ is a
local time of the process $(x(t))_{t\geq0}$ at the point $\delta$
defined by the formula}
\begin{equation}\label{local time}
L_\delta^x(t)=\lim_{\varepsilon\downarrow0}\frac{1}{\varepsilon}\int_0^t\mathds{1}_{[\delta, \delta+\varepsilon)}(x(s))ds, \ t\geq0.
\end{equation}
\end{Lemma}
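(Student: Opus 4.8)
The plan is to approximate $\zeta_\delta(x) = x\vee\delta$ by smooth functions $f_n \in C^2_c([0,\infty))$ that agree with $x\vee\delta$ outside a small neighborhood of $\delta$, and to pass to the limit in the SDE \eqref{f_martingale} written for each $f_n$. The point is that $f(x)=x$ is not admissible (it is not constant near $0$), but $\zeta_\delta$ \emph{is} constant (equal to $\delta$) on $[0,\delta]$, so it is morally in $C^2_c$ except that it fails to be twice differentiable at the single point $\delta$. The Tanaka-type formula \eqref{eta_delta_x} is exactly what one expects: the function $x\mapsto x\vee\delta$ is convex, its second derivative in the distributional sense is the Dirac mass $\delta_{\{\delta\}}$, and the Itô--Tanaka formula produces the local time term $\tfrac12 L_\delta^x(t)$.

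First I would fix $\delta>0$ and, for each $n$ large, choose $f_n\in C^2_c([0,\infty))$ such that $f_n(x) = x\vee\delta$ for $x\notin(\delta-\tfrac1n,\delta+\tfrac1n)$, with $0\le f_n' \le 1$ everywhere, $f_n'=0$ on $[0,\delta]$, $f_n'=1$ on $[\delta+\tfrac1n,\infty)$, and $f_n''\ge0$ (a convex smoothing of the corner). Plugging $f_n$ into \eqref{f_martingale} gives
\begin{multline*}
f_n(x(t)) = f_n(x(0)) + \int_0^t\left(a(x(s))f_n'(x(s)) + \tfrac12\sigma^2(x(s))f_n''(x(s))\right)ds \\ + \int_0^t \sigma(x(s))f_n'(x(s))\,dw(s).
\end{multline*}
Now I would let $n\to\infty$ term by term. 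The left side converges to $\zeta_\delta(x(t))$ and the initial term to $\zeta_\delta(x(0))$ pointwise. In the drift, $f_n'(x(s))\to\mathds{1}_{(\delta,\infty)}(x(s))$ for every $s$ with $x(s)\neq\delta$, and since the occupation time of $\{\delta\}$ is a.s.\ zero (the process is a continuous semimartingale on $(0,\infty)$, so $\int_0^t\mathds{1}_{\{\delta\}}(x(s))\,ds=0$ a.s.), dominated convergence gives $\int_0^t a(x(s))f_n'(x(s))\,ds\to\int_0^t a(x(s))\mathds{1}_{(\delta,\infty)}(x(s))\,ds$; note $a$ is bounded on the relevant set $\{x(s)>\delta-1/n\}\cap[\text{compact}]$ by Condition A plus continuity of paths. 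The stochastic integral converges in probability (uniformly on compacts) by the Itô isometry together with the same dominated-convergence argument applied to $\sigma(x(s))^2 f_n'(x(s))^2$.

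The main obstacle is identifying the limit of the second-order term $\tfrac12\int_0^t \sigma^2(x(s))f_n''(x(s))\,ds$ as $\tfrac12 L_\delta^x(t)$ with $L_\delta^x$ defined by \eqref{local time}. Here I would argue as follows: since the four other terms in the displayed identity converge, the term $A_n(t):=\tfrac12\int_0^t\sigma^2(x(s))f_n''(x(s))\,ds$ converges (in probability, uniformly on compacts) to some continuous nondecreasing process $A(t)$. Because $f_n''$ is supported in $(\delta-\tfrac1n,\delta+\tfrac1n)$ and $\int f_n''=1$, the measure $f_n''(y)\,dy$ converges weakly to $\delta_{\{\delta\}}$; combined with continuity of $\sigma^2$ at $\delta$ and the occupation-times formula for the continuous semimartingale $x$ on $(0,\infty)$ — namely $\int_0^t g(x(s))\sigma^2(x(s))\,ds = \int_0^\infty g(a)\,L_a^x(t)\,da$ — one gets $A_n(t)=\tfrac12\int_0^\infty f_n''(a)L_a^x(t)\,da \to \tfrac12 L_\delta^x(t)$ using a.s.\ continuity of $a\mapsto L_a^x(t)$ at $a=\delta$ (right-continuity suffices with the one-sided smoothing, but the symmetric local time in \eqref{local time} is handled by choosing $f_n''$ symmetric about $\delta$). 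Finally I would check that the local time defined by the occupation-density/semimartingale construction coincides with the explicit limit \eqref{local time}; this is the standard identification of semimartingale local time with the occupation-time limit, valid since $x$ is a continuous semimartingale away from $0$ and spends zero time at $\delta$. Assembling the limits of all five terms yields \eqref{eta_delta_x} for each fixed $t$, and by continuity of both sides the identity holds simultaneously for all $t\ge0$ a.s.
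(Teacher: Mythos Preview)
Your approach is correct and essentially the same as the paper's: both smooth the corner of $x\mapsto x\vee\delta$, apply the defining identity \eqref{f_martingale} to the approximants, and identify the limit of the second-order term via the occupation-times formula. The only difference is organizational: the paper first introduces an auxiliary $\eta_\delta\in C^2_c([0,\infty))$ with $\eta_\delta(x)=x$ on $[\delta/2,\infty)$, so that $\eta_\delta(x(t))$ is a genuine semimartingale to which the standard Tanaka/occupation-times machinery applies directly, and then composes with a mollified $y\mapsto y\vee\delta$; you instead approximate $\zeta_\delta$ in one step and appeal to ``$x$ is a semimartingale on $(0,\infty)$''. That phrase is the one soft spot---to make your use of the occupation-times formula $\int_0^t f_n''(x(s))\sigma^2(x(s))\,ds=\int f_n''(a)L_a^x(t)\,da$ fully rigorous you implicitly need exactly the $\eta_\delta$ trick, since $x$ itself has no a priori semimartingale decomposition. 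Once you note that $f_n''$ is supported in $(\delta/2,\infty)$ and replace $x$ by $\eta_{\delta/2}(x)$ there, your argument coincides with the paper's.
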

\begin{proof} We make use of a standard approximation of
non-smooth function by smooth ones like the construction in the
proof of Tanaka's formula (see, for example, Theorem 4.1, Ch. 3 of
\cite{Ikeda+81}). For $a>0$, let us approximate the function
$\xi_a(x)=x\vee a$ by twice continuously differentiable functions.
Put
\begin{equation*}
\psi(x)=
\begin{cases}  C\exp{\left(\frac{1}{(x-1)^2-1}\right)}, & 0<x<2, \\
0, & \mbox{otherwise},
\end{cases}
\end{equation*}
where $C$ is a constant such that $\int_{-\infty}^{\infty}\psi(x)dx=1,$ and define, for $n\geq1$
$$
u_n(x)=\int_{-\infty}^x dy\int_{-\infty}^y n\psi(nz)dz.
$$
Then the function $u_n$ is twice continuously differentiable and
$u_n(x-a)+a\rightarrow \xi_a(x)$, as $n\to\infty$. Further,
$$
u_n'(x-a)\to\mathds{1}_{(a,\infty)}(x), \ n\to\infty,
$$
and
\begin{equation}\label{delta_func}
\int_{-\infty}^{\infty}u_n''(x-a)\varphi(x)dx\to\varphi(a),\ n\to \infty
\end{equation}
for any continuous and bounded function $\varphi$.

Let $\eta_\delta\in C_c^2([0,\infty))$  be a non-decreasing  and such
that $\eta_\delta(x)=x$ on $[\delta/2,\infty)$.  The process $\left(\eta_\delta(x(t))\right)_{t\geq0}$ can be represented in
the form (\ref{f_martingale}), and thus it is a semimartingale. By
It$\hat{o}$ formula we have
\begin{multline}\label{u_n}
u_n(\eta_\delta(x(t))-\delta)+\delta=u_n(\eta_\delta(x(0))-\delta)+\delta\\+
 \int_0^t \left[a(x(s))\eta'_\delta(x(s))+\frac{1}{2}\sigma^2(x(s))\eta''_\delta(x(s))\right]u_n'(\eta_\delta(x(s))-\delta)ds\\+
\int_0^t\sigma(x(s))\eta'_\delta(x(s))u_n'(\eta_\delta(x(s))-\delta)dw(s)\\+
\frac{1}{2}\int_0^t\sigma^2(x(s))(\eta'_\delta(x(s)))^2
u_n''(\eta_\delta(x(s))-\delta)ds.
\end{multline}
By occupation times formula (cf. \cite{Protter04}, Corollary 1, p.
216),  the last integral on the right-hand side of (\ref{u_n}) is
equal to
\begin{multline*}
\int_0^t u_n''(\eta_\delta(x(s))-\delta)d\langle \eta_\delta(x)\rangle(s)=\int_{-\infty}^{+\infty}u_n''(a-\delta)L_a^{\eta_\delta(x)}(t)da\to
L_\delta^{\eta_\delta(x)}(t), \\ \text{as } \  n\to\infty.
\end{multline*}
Here $L_\delta^{\eta_\delta(x)}(t)$ is a local time of the process
$(\eta_\delta(x(t)))_{t\geq0}$ at the point $\delta$.

Note that $\zeta_\delta(x)=x\vee\delta=\eta_\delta(x)\vee\delta, \ x\in[0,\infty)$. Passing to the
limit in (\ref{u_n}) as $n\to \infty$ and taking into account that
$\eta_\delta(x)=x, \ x>\delta/2,$  we arrive at the equation
(\ref{eta_delta_x}).
\end{proof}
The main result of the paper is the following theorem.
\begin{Theorem}\label{Theorem2}
{ \it Suppose $a,\sigma$ satisfy Condition A and $\sigma(x)\neq0, \
x\geq0$. If for each ${x_0}\geq0$ there exists a solution to the
martingale problem $M(a,\sigma,x_0)$, then for each $x_0\geq0$ there
exists a strong solution to equation (\ref{f_martingale}) with
initial condition $x_0$ spending zero time at the point $0$ and
the strong uniqueness holds in the class of solutions spending
zero time at $0$.}
\end{Theorem}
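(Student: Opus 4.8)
I would prove the theorem along the Yamada--Watanabe scheme combined with the local-time method of Le~Gall~\cite{LeGall83}. Two things have to be established: (a) existence of a \emph{weak} solution of~(\ref{f_martingale}) spending zero time at $0$, and (b) pathwise uniqueness within the class of such solutions. Once (a) and (b) are available, the existence of a strong solution spending zero time at $0$, together with strong uniqueness in that class, follows from the Yamada--Watanabe argument adapted to the present formulation (the adaptation is routine and of the same nature as the equivalence in Remark~\ref{equivalent}; the hypothesis $\sigma\ne0$ is precisely what makes the driving Brownian motion measurable with respect to the solution through its martingale part). Part (a) is immediate: for each $x_0\ge0$ a solution $P_{x_0}$ of $M(a,\sigma,x_0)$ exists by assumption, conditions (i)--(ii) of Definition~\ref{mart_prob} together with Remark~\ref{equivalent} give a weak solution $(x,w)$ of~(\ref{f_martingale}) with $x(0)=x_0$, and condition (iii) says exactly that it spends zero time at $0$.

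For part (b), let $(x,w)$ and $(\tilde x,w)$ be two weak solutions on the same probability space, with the same Brownian motion and the same initial value $x_0$, both spending zero time at $0$. Fix a non-decreasing $f\in C^2_c([0,\infty))$; then $f(x)$ and $f(\tilde x)$ are continuous semimartingales given by~(\ref{f_martingale}), so $f(x\vee\tilde x)=f(x)\vee f(\tilde x)=f(x)+(f(\tilde x)-f(x))^{+}$ is a semimartingale to which the It\^o--Tanaka formula applies. Using that $f'$ and $f''$ vanish on $[0,\delta_f]$, that $\{f(x(s))<f(\tilde x(s))\}\subset\{\tilde x(s)>x(s)\}\cap\{\tilde x(s)>\delta_f\}$, and that on this set $(x\vee\tilde x)(s)=\tilde x(s)$, one identifies the drift and martingale parts of $f(x\vee\tilde x)$ with those of the right-hand side of~(\ref{f_martingale}) evaluated at $x\vee\tilde x$, the only extra term being $\tfrac12 L_0^{f(\tilde x)-f(x)}(t)$. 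Thus, \emph{if one shows that the local time at $0$ of $f(\tilde x)-f(x)$ vanishes} for every non-decreasing $f\in C^2_c([0,\infty))$, then $x\vee\tilde x$ solves~(\ref{f_martingale}) and, since $\int_0^\infty\mathds{1}_{\{0\}}((x\vee\tilde x)(s))\,ds\le\int_0^\infty\mathds{1}_{\{0\}}(x(s))\,ds=0$, is again a weak solution spending zero time at $0$. To conclude, compare $f(x\vee\tilde x)$ with $f(x)$: the difference $V_f:=f(x\vee\tilde x)-f(x)=(f(\tilde x)-f(x))^{+}\ge0$ starts at $0$, has vanishing local time at $0$, and its drift and diffusion coefficients vanish on $\{V_f=0\}$ while, where $V_f>0$ and $x\vee\tilde x$, $x$ stay in a fixed compact subset of $(0,\infty)$, they are controlled by Condition~A; a Gronwall/Osgood estimate then forces $V_f\equiv0$, i.e.\ $f(x)\ge f(\tilde x)$. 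Letting $f$ increase to the identity on $(\delta_f,\infty)$ with $\delta_f\downarrow0$ yields $x\ge\tilde x$, and by symmetry $x=\tilde x$.

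The main difficulty is the behaviour at the singular point $0$, and it pulls the construction in two opposite directions. The vanishing of the local time of $f(\tilde x)-f(x)$ (and of $V_f$) at $0$ reduces to $\varepsilon^{-1}\int_0^t\mathds{1}_{(0,\varepsilon)}(f(\tilde x(s))-f(x(s)))\,\bigl(\sigma f'(\tilde x(s))-\sigma f'(x(s))\bigr)^2\,ds\to0$, which holds provided $y\mapsto\sigma(y)f'(y)$ is, relative to $f$, H\"older continuous of exponent strictly greater than $\tfrac12$; since $f\in C^2_c$ is constant near $0$, so that $f'(\delta_f)=f''(\delta_f)=0$, this can be arranged by letting $f$ flatten sufficiently strongly at $\delta_f$ and invoking the local Lipschitz continuity of $\sigma$ on $(0,\infty)$ (which supplies, on each $[\varepsilon,\infty)$, a Yamada--Watanabe modulus $\rho(u)=C_\varepsilon u$ with $\int_{0+}\rho(u)^{-2}\,du=\infty$). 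But the same strong flattening makes $af'+\tfrac12\sigma^2 f''$ only H\"older of exponent strictly less than $1$ relative to $f$ near $\delta_f$, so the Gronwall estimate for $V_f$ cannot be closed with a \emph{fixed} $f$; one has to let $\delta_f\downarrow0$ and absorb the residual drift on the shrinking flat region using condition~(iii) (the same condition being also what identifies the limits of the drift and martingale terms as $\delta_f\downarrow0$, exactly as in the proof of Lemma~\ref{approxim}). Reconciling these two competing regularity requirements — a judicious choice of the approximating functions $f$ together with the passage to the limit $\delta_f\downarrow0$ under condition~(iii) — is, I expect, the technical heart of the proof.
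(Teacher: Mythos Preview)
Your overall architecture is right --- weak existence from Remark~\ref{equivalent}, Le~Gall's maximum trick, then Yamada--Watanabe --- but the execution of part~(b) misses the paper's central idea, and the route you sketch does not close. The ``competing regularity requirements'' you flag are not a technicality to be worked through; they are a genuine obstruction. If $f$ flattens enough at $\delta_f$ for the modulus argument to kill $L_0^{f(\tilde x)-f(x)}$ (say $f(y)-f(\delta_f)\sim(y-\delta_f)^3$), then $g:=af'+\tfrac12\sigma^2f''$ satisfies $g(y)\sim 3\sigma^2(\delta_f)(y-\delta_f)$, so as a function of $V_f$ the drift of $V_f$ behaves like $V_f^{1/3}$ near $0$. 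No Osgood condition holds, and $V_f(0)=0$ together with $L_0^{V_f}=0$ does \emph{not} force $V_f\equiv0$. Letting $\delta_f\downarrow0$ does not rescue this, because the bad exponent does not improve.

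The paper resolves the dilemma by inserting an extra step you omit: it first proves \emph{weak uniqueness} (Lemma~\ref{Prop well-posed}). Using the representation of $\zeta_\delta(x)=x\vee\delta$ from Lemma~\ref{approxim}, a scale transformation and a time change reduce the law of $x(\cdot)\vee\delta$ to that of a Brownian motion reflected at $\Delta=s(\delta)$; letting $\delta\downarrow0$ (here condition~(iii) is used) identifies $Law(x)$ uniquely. With weak uniqueness in hand, the conclusion of Le~Gall's argument (Lemma~\ref{pathwise_uniqueness}) is the one-line expectation identity $E^P[(x_1\vee x_2)(t)]=E^P[x_1(t)]$, which forces $x_1\vee x_2=x_1$ a.s.; no Gronwall/Osgood estimate is needed.

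The paper also handles the local-time term differently. It does not use a Yamada--Watanabe modulus at all. Working with $\zeta_\delta(x_i)=x_i\vee\delta$, it observes that on any excursion interval where both $\zeta_\delta(x_1),\zeta_\delta(x_2)>\delta$, the processes solve a genuine SDE with locally Lipschitz coefficients on $(\delta,\infty)$; the homeomorphism property of the flow (Protter, Theorem~V-46) then forces them either to coincide on the whole excursion or to stay apart, so $L_0^{\zeta_\delta(x_1)-\zeta_\delta(x_2)}$ can increase only on $\{\zeta_\delta(x_1)=\zeta_\delta(x_2)=\delta\}$. Since any $f\in C^2_c$ has $f'=f''=0$ there, the extra term vanishes after applying It\^o's formula to $f$, and $(x_1\vee x_2)$ satisfies~(\ref{f_martingale}) for every such $f$. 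In short: prove weak uniqueness first, use flows rather than moduli for the local time, and replace the Gronwall step by an expectation comparison.
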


We split the proof of  Theorem into two steps. At the first one we
show that the existence of a solution to the martingale problem
provides well-posedness. At the second one  the pathwise
uniqueness is obtained from weak uniqueness. These two steps are
formulated as Lemmas in the following way.

\begin{Lemma}[weak uniqueness]\label{Prop well-posed} {\it Suppose $a,\sigma$ satisfy the conditions of Theorem \ref{Theorem2}. Let for each $\ {x_0}\geq0$,
there exists a solution to martingale problem $M(a,\sigma,{x_0})$.
Then the weak uniqueness holds for equation (\ref{f_martingale}).}
\end{Lemma}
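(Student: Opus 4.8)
The plan is to reduce weak uniqueness, by a change of scale followed by a random time change, to the uniqueness in law of a (reflected) Brownian motion.

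By Remark~\ref{equivalent} it suffices to prove uniqueness of the martingale problem $M(a,\sigma,x_0)$; existence being assumed, let $P_1,P_2$ be two solutions started at a common $x_0\ge0$. On $(0,\infty)$ the coefficients are locally Lipschitz and $\sigma$ does not vanish, so the scale function $p'(x)=\exp\bigl(-\int_1^x 2a(u)\sigma^{-2}(u)\,du\bigr)$, $p(x)=\int_1^x p'(u)\,du$, is well defined, $C^2$, strictly increasing, and satisfies $Lp:=ap'+\tfrac12\sigma^2 p''=0$ on $(0,\infty)$. Testing property~(ii) of Definition~\ref{mart_prob} against a function of $C^2_c$ that coincides with $p$ on $[\varepsilon,\infty)$ shows that $p(x(\cdot\wedge\tau_\varepsilon))$ is a continuous local martingale ($\tau_\varepsilon=\inf\{t:x(t)=\varepsilon\}$); letting $\varepsilon\downarrow0$, $p(x(\cdot))$ is a local martingale up to $\tau_0:=\inf\{t:x(t)=0\}$. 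If $p(0+)=-\infty$, the exit identity $P(\tau_\varepsilon<\tau_N)=\bigl(p(N)-p(x_0)\bigr)/\bigl(p(N)-p(\varepsilon)\bigr)$ (with $\tau_N$ the hitting time of $N$) forces $\tau_0=\infty$ a.s.; then $x$ lives in $(0,\infty)$, where the coefficients are locally Lipschitz and non-degenerate, and classical well-posedness of the problem stopped on $[\varepsilon,N]$ (Yamada--Watanabe) together with the localization $\varepsilon\downarrow0$, $N\uparrow\infty$ shows that the law of $x$ is uniquely determined.

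It remains to treat the case $p(0+)>-\infty$. Normalizing $p(0+)=0$, extending $p$ continuously to $[0,\infty)$, and putting $y(t)=p(x(t))$, $\kappa=(p'\sigma)^2\circ p^{-1}$, one has $g\circ p\in C^2_c([0,\infty))$ for $g\in C^2_c([0,\infty))$ (it is constant near $0$) and $L(g\circ p)=\tfrac12\kappa\,(g''\circ p)$, so testing~(ii) against such $g$ shows that $y$ satisfies (i)--(iii) of Definition~\ref{mart_prob} with drift $0$ and diffusion coefficient $\sqrt\kappa$; in particular $y$ spends zero time at $0$. Set $A(t)=\int_0^t\kappa(y(s))\,ds$: since $\kappa>0$ on $(0,\infty)$ and $\{x(s)=0\}$ is Lebesgue-null, $A$ is continuous and strictly increasing, so with $T=A^{-1}$ and $B(u)=y(T(u))$, a standard time-change argument shows that $B$ satisfies (i)--(iii) with drift $0$ and diffusion coefficient $1$. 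As $B\ge0$, Lemma~\ref{approxim} applied to $B$ --- passing to the limit $\delta\downarrow0$ in~(\ref{eta_delta_x}) and invoking Skorokhod's reflection lemma --- identifies $B$ with the reflected Brownian motion started at $p(x_0)$, whose law is uniquely determined. Finally, differentiating $A$ and using $y=B\circ A$ gives $A'(t)=\kappa(B(A(t)))$, hence $H(A(t))=t$ with $H(u):=\int_0^u\kappa(B(r))^{-1}\,dr$, i.e.\ $A=H^{-1}$; by the occupation-times formula $H$ is finite, continuous and strictly increasing, because $\int_{0+}\kappa(a)^{-1}\,da<\infty$ --- equivalently, the speed measure of $L$ is finite near $0$, which must hold here since otherwise $0$ would be an exit (absorbing) boundary and no solution could satisfy~(iii). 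Therefore $x=p^{-1}\bigl(B\circ H^{-1}\bigr)=:\Psi(B)$ for one measurable map $\Psi$ independent of the solution, so $P_i\circ x^{-1}$ equals the image under $\Psi$ of the common law of $B$ --- the required uniqueness.

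The main obstacle, here and in the case $p(0+)=-\infty$, is the boundary analysis at $0$: one must turn condition~(iii) into the rigid alternative that $0$ is either inaccessible or a regular, instantaneously reflecting boundary --- never sticky or partially absorbing --- so that $A$ is a genuine bijection and the limit $\delta\downarrow0$ in Lemma~\ref{approxim} yields exactly reflected, rather than sticky or partly absorbed, Brownian motion.
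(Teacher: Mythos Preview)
Your strategy---kill the drift with a scale change and then time-change the natural-scale process to a reflected Brownian motion---is the same as the paper's. The essential difference is \emph{where} the truncation is performed. The paper never applies the scale map to $x$ itself; it first invokes Lemma~\ref{approxim} to pass to $\zeta_\delta(x(t))=x(t)\vee\delta$ for a fixed $\delta>0$, applies the scale map $s$ to that, and only then carries out the time changes. Because everything lives on $[\delta,\infty)$, the time-change clock $A(t)=\int_0^t\varkappa^2(U(s))\,ds$ is trivially finite and continuous, and the paper also verifies (using a preliminary reduction to strong Markov solutions, which you omit) that the excursion clock $D_t\to\infty$. Only at the very last step does the paper send $\delta\downarrow0$, and there condition~(iii) enters in a completely soft way: it forces $\varphi_\delta(t)\rightrightarrows t$, hence $x(\varphi_\delta(t))\vee\delta\to x(t)$.

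The gap in your argument is exactly the point you flag in your closing paragraph, and it is not merely a technicality. You assert that $A(t)=\int_0^t\kappa(y(s))\,ds$ is ``continuous'' (in particular finite), but the justification you give establishes only strict monotonicity; finiteness is genuinely in doubt because $\kappa=(p'\sigma)^2\circ p^{-1}$ typically blows up at $0$ while $y$ may visit $0$. Your attempt to recover this a posteriori---writing $A=H^{-1}$ with $H(u)=\int_0^u\kappa(B(r))^{-1}\,dr$ and arguing that $H$ is finite because the speed measure must be finite near $0$, since otherwise $0$ would be an exit boundary contradicting~(iii)---is circular in two ways. First, the identity $A=H^{-1}$ presupposes that $A$ is finite so that $B=y\circ A^{-1}$ exists in the first place. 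Second, the Feller classification tells you how \emph{the} diffusion with generator $L$ behaves at $0$; deducing that an \emph{a priori arbitrary} solution of the martingale problem must be absorbed when the speed measure is infinite is essentially the uniqueness you are trying to prove. The paper's device of working at level $\delta$ throughout is precisely what sidesteps this: every integrability issue is trivial on $[\delta,\infty)$, and condition~(iii) is used only for the harmless limit $\delta\downarrow0$.
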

\begin{proof} We would like to get a law of the process $(x(t))_{t\geq 0}$.
But we don't know an integral representation for $(x(t))_{t\geq 0}$ itself.
Instead, we consider the process $(x(t)\vee\delta)_{t\geq 0}$.
Applying a space transformation and change of time to the process $(x(t)\vee\delta)_{t\geq 0}$ we will see
that the law of the process obtained coincides
with that of the Wiener process with reflection at the point $\delta$.

Similarly to Theorem 12.2.5 of \cite{Strook+79} it can be shown
that the existence of solution to the martingale problem for each $x_0\geq0$ implies
the existence of a strong Markov, time homogeneous measurable Markov
family $\{\tilde P_{x_0}: \ {x_0}\in[0,\infty)\}$ such that for
each ${x_0}\in[0,\infty)$, $\tilde P_{x_0}$ is a solution to the
martingale problem starting from ${x_0}$. And by Theorem 12.2.4 of
\cite{Strook+79}
 to prove the uniqueness of a solution to the martingale problem it is sufficient to prove the uniqueness only for the family of strong Markov, time homogeneous
 solutions. If $\tilde P_{x_0}$   is such a solution starting from ${x_0}$, then according to Remark \ref{equivalent} there exists a
 pair  $(x,w)$  on some probability space $(\Omega,\mathcal{F}, P_{x_0})$ which is  a weak solution to equation (\ref{f_martingale})
  and $ P_{x_0} x^{-1}=\tilde P_{x_0}$. This yields that $(x(t))_{t\geq0}$ is a strong Markov and time homogeneous process.

Note that if the process $(x(t))_{t\geq0}$ does not hit zero the
assertion of Lemma is trivial. So from now on we suppose that
starting from ${x_0}$ the process  $(x(t))_{t\geq0}$ hits zero
$P$-a.s.

We follow the proof of Theorem 2.12 of \cite{Cherny+05}.
Denote
\begin{eqnarray*}
\rho(x)&=&\exp\left(\int_x^1 \frac{2a(y)}{\sigma^2(y)}dy\right), \ x\in(0,1],\\
s(x)&=&\begin{cases} \int_0^x\rho(y)dy\ & \mbox{if} \ \int_0^1\rho(y)dy<\infty,\\
                     -\int_x^1\rho(y)dy\ & \mbox{if} \ \int_0^1\rho(y)dy=\infty.
       \end{cases}
\end{eqnarray*}
Let $\zeta_\delta(x(t))=x(t)\vee\delta, \ t\geq0$. Then by Lemma 1
\begin{multline*}
\zeta_\delta(x(t))=\zeta_\delta(x(0))+\int_0^t a(x(s))\mathds{1}_{(\delta,+\infty)}(x(s))ds\\+
\int_0^t\sigma(x(s))\mathds{1}_{(\delta,+\infty)}(x(s))dw(s)+\frac{1}{2}L_\delta^{x}(t),
\ t\geq0.
\end{multline*}

Set $\Delta=s(\delta), y(t)=s(x(t)\vee\delta)=s(x)\vee\Delta, \ t\geq0$.
By It$\hat{o}$-Tanaka formula applied to the function $x\mapsto
s(x)\vee\Delta$, we have
$$
y(t)=y(0)+\int_0^t \rho(x(s))\sigma(x(s))\mathds{1}_{(\delta,+\infty)}(x(s))dM(s)+\frac{1}{2}\rho(\delta)L_\delta^x(t),
$$
where $M(s)=\int_0^t\sigma(x(s))dw(s).$
Applying It$\hat{o}$-Tanaka formula to the function $y\mapsto y\vee\Delta$, we get
\begin{multline*}
y(t)=y(t)\vee\Delta=y(0)+\int_0^t
\rho(s^{-1}(y(s)))\sigma(x(s))\mathds{1}_{(\Delta,+\infty)}(y(s))dM(s)+\frac{1}{2}\rho(\delta)L_\Delta^y(t)\\=
y(0)+N(t)+\frac{1}{2}\rho(\delta)L_{\Delta}^y(t),
\end{multline*}
where $N(t)=\int_0^t
\rho(s^{-1}(y(s)))\sigma(x(s))\mathds{1}_{(\Delta,+\infty)}(y(s))dM(s)$.

Consider $D_{t}=\int_0^t\mathds{1}_{(\Delta,+\infty)}(y(s))ds$.  Let us show that $D_{t}\to\infty$ as $t\to \infty$. Set
for $a,b>0$, $T_{a,b}=\inf\{t>0:x(t)=a\ \mbox{or}\ b\}$. Define
\begin{eqnarray*}
\mu_0&=&\inf\{t>0:x(t)=0\}, \mbox {and for }\  k=1,2,\dots,\\
\mu_k&=&\inf\{t>0: t\geq\mu_{k-1}+1,\ x(t)\geq2\delta \ \mbox{or}\  x(t)=0\}
\end{eqnarray*}
If the process can hit zero in finite time then for all $y\in[0,2\delta]$, $
P_y(T_{0,2\delta}<\infty)=1$ (cf. \cite{Cherny+05}, Section 2). Then
\begin{multline}\label{mu_1}
P_0(\mu_1<\infty)=P_0(x(1)>2\delta)+\int_{[0,2\delta]}P_y(T_{0,2\delta}<\infty)P_0(x(1)\in dy)\\=
P_0(x(1)>2\delta)+P_0(x(1)\in[0,2\delta])=1.
\end{multline}
Let
\begin{eqnarray*}
\tau_1&=&\inf\{t\geq0: x(t)=2\delta\}, \ \mbox{for}\ k=1,2,\dots,\\
\varkappa_k&=&\inf\{t>\tau_k: x(t)=\delta\}, \ \mbox {and for}\ k=2,3,\dots, \\
\tau_k&=&\inf\{t>\varkappa_{k-1}: x(t)=2\delta\}.
\end{eqnarray*}
Equality (\ref{mu_1}) yields $P_0(\tau_1<\infty)=1$. Indeed, note that $P_0(x(\mu_1)=0)=\alpha\in(0,1).$ Then, by strong Markov property
\begin{multline*}
P_0(\tau_1=\infty)\leq P_0(\tau_1\geq\mu_n)\leq P_0(\bigcap_{k=1}^n(x(\mu_k)=0))\\=
(P_0(x(\mu_1)=0))^n=\alpha^n\to0, \ \mbox{as} \ n\to\infty.
\end{multline*}
Thus $P_0(\tau_1<\infty)=1$. Let for $k=1,2,\dots,$ $\ \zeta_k=\varkappa_k-\tau_k$. Then $\{\zeta_k:k\geq1\}$ is a sequence of positive independent identically distributed random variables. Consequently, $D_\infty\geq\sum_{k=1}^n\zeta_k\to\infty,$ as $n\to\infty$. So $\lim_{t\to +\infty}D_t=+\infty$

Put
$$
\varphi_\delta(t)=\inf\{s\geq0:D(s)>t\},
$$
and
\begin{equation}\label{K(t)1}
U(t)=y(\varphi_\delta(t))=U(0)+N(\varphi_\delta(t))+\frac{1}{2}\rho(\delta)L_\Delta^y(t),\
t\geq0.
\end{equation}

It can be seen that the process $K(t)=N(\varphi_\delta(t))$ is a
martingale and
$$
\langle K\rangle(t)=\int_0^t\varkappa^2(U(s))ds, \ t\geq0,
$$
where
$
\varkappa(x)=\rho(s^{-1}(x))\sigma(s^{-1}(x)), \ x>0.
$
By It$\hat{o}$-Tanaka formula we have
\begin{equation}\label{K(t)2}
U(t)=U(0)+\int_0^t\mathds{1}_{(\Delta,+\infty)}(U(s))dK(s)+\frac{1}{2}\int_0^t\mathds{1}_{(\Delta,+\infty)}(U(s))dL_{\Delta}^y(\varphi_\delta(t))+\frac{1}{2}L_\Delta^U(t), \ t\geq0.
\end{equation}
Making use change of variables in Lebesgue-Stiltjes integrals and
taking into account that measure $dL_\Delta^y(\varphi_\delta(t))$ increases
only on the set $\{t\geq0:y(\varphi_\delta(t))=\Delta\}$, we arrive at the
equality
$$
\int_0^t\mathds{1}_{(\Delta,+\infty)}(U(s))dL_\Delta^{y}(\varphi_\delta(s))=\int_{\varphi_\delta(0)}^{\varphi_\delta(t)}\mathds{1}_{(\Delta,+\infty)}(y(s))dL_\Delta^{y}(s)=0.
$$
Comparing (\ref{K(t)1}) with (\ref{K(t)2}) we get from the
uniqueness of the semimartingale decomposition of $U$ that
$$
\int_0^t\mathds{1}_{(\Delta,+\infty)}(U(s))dK(s)=K(t),\ t\geq0,
$$
and
$$
U(t)=U(0)+K(t)+\frac{1}{2}L_\Delta^U(t),\ t\geq0.
$$
Consider
$$
A(t)=\int_0^t\varkappa^2(U(s))ds,
$$
and put
$$
A(\infty)=\lim_{t\rightarrow\infty}\varkappa^2(U(s))ds,
$$
$$
\tau(t)=\inf\{s\geq0:A(s)>t\},\ 0\leq t<A(\infty).
$$
Arguing as above we arrive at the equation
$$
V(t)=U(\tau(t))=V(0)+J(t)+\frac{1}{2}L_\Delta^V(t),\  0\leq t<A(\infty),
$$
where $J(t)=K(\tau(t)),\ 0\leq t <A(\infty)$, and $\langle
J\rangle(t)=\int_0^{\tau(t)}\varkappa^2(U(s))ds=t.$  By Theorem
7.2, Ch.2 of \cite{Ikeda+81} there exists a Brownian motion
$(w(t))_{t\geq0}$ (defined, possibly, on an enlarged probability
space) such that $J$ coincides with $w$  on $[0,A(\infty))$.
Skorokhod's lemma (cf. \cite{Revuz+99}, Ch.VI, Lemma 2.1) and
Lemma 2.3, Ch.VI of \cite{Revuz+99} allow us make the conclusion
that the process $V$ is a Brownian motion started at
$V(0)=s(x(0))\vee\Delta$, reflected at $\Delta$. Thus the measure
$P^\delta=Law(U(t):t\geq0)=Law(y(\varphi_\delta(t)): t\geq0)$ is determined
uniquely and does not depend on the choice of a solution $\tilde
P_{x_0}$. This entails that the law of the process
$x(\varphi_\delta(t))\vee\delta$ is uniquely defined. Note that item $(iii)$
of Definition \ref{mart_prob} provides that the process
$(x(t))_{t\geq0}$ spends zero time at the point 0 $P_{x_0}$-a.s.
Then for each $T>0$, $\varphi_\delta(t)\rightrightarrows t$ on $[0,T]$ and,
consequently, $x(\varphi_\delta(t))\rightrightarrows x(t)$, as
$\delta\downarrow0$ $P_{x_0}$-a.s. Therefore, $Law(x(t):t\geq0)$ is
defined uniquely and does not depend on the choice of the solution
$\tilde P_{x_0}$. Then according to Remark \ref{equivalent} the
weak uniqueness holds for equation (\ref{f_martingale}).
\end{proof}
\begin{Lemma}[pathwise uniqueness]\label{pathwise_uniqueness}
{\it Let the weak uniqueness hold for equation
(\ref{f_martingale}). Then the pathwise uniqueness holds true for
(\ref{f_martingale}).}
\end{Lemma}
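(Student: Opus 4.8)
The plan is to follow Le Gall's method announced in the introduction: if $(x,w)$ and $(\tilde x,w)$ are two weak solutions of (\ref{f_martingale}) carried by one filtered probability space, driven by the same Brownian motion $w$ and starting from the same point $x_0$, I will show that the pointwise maximum $\hat x(t):=x(t)\vee\tilde x(t)$ is again a weak solution of (\ref{f_martingale}), and then read off $x\equiv\tilde x$ from the assumed weak uniqueness. At the outset note the cheap facts: $\hat x$ is continuous and $[0,\infty)$-valued, $\hat x(0)=x_0$, and since $\hat x\ge x$ we have $\int_0^\infty\mathds{1}_{\{0\}}(\hat x(s))\,ds\le\int_0^\infty\mathds{1}_{\{0\}}(x(s))\,ds=0$, so $\hat x$ also spends zero time at $0$ and thus lies in the admissible class.

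The heart of the argument is the verification of (\ref{f_martingale}) for $f(\hat x)$, for every $f\in C^2_c([0,\infty))$. I would fix $\delta>0$ and set $u:=x\vee\delta$, $\tilde u:=\tilde x\vee\delta$, $v:=u\vee\tilde u=\hat x\vee\delta$; by Lemma \ref{approxim}, $u$ and $\tilde u$ are semimartingales with the explicit decomposition (\ref{eta_delta_x}). Writing $v=\tilde u+(u-\tilde u)^+$ and applying the It\^o--Tanaka formula,
\[
v(t)=v(0)+\int_0^t\mathds{1}_{\{u(s)>\tilde u(s)\}}\,du(s)+\int_0^t\mathds{1}_{\{u(s)\le\tilde u(s)\}}\,d\tilde u(s)+\tfrac12\,L_0^{u-\tilde u}(t).
\]
Substituting (\ref{eta_delta_x}) for $du$ and $d\tilde u$ and checking cases (on $\{u>\tilde u\}$ one has $v=u>\delta$, whence $x>\delta$ and $x=v$; symmetrically on $\{u<\tilde u\}$; on $\{u=\tilde u\}$ either $v>\delta$ with $x=\tilde x=v$ or $v=\delta$), the drift and the diffusion coefficients reassemble into $a(v)\mathds{1}_{(\delta,\infty)}(v)$ and $\sigma(v)\mathds{1}_{(\delta,\infty)}(v)$, while the three local-time contributions $\mathds{1}_{\{u>\tilde u\}}dL^x_\delta$, $\mathds{1}_{\{u\le\tilde u\}}dL^{\tilde x}_\delta$ and $dL_0^{u-\tilde u}$ add up to $d\Lambda$ for a continuous non-decreasing process $\Lambda$.

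The step I expect to be the main obstacle is to check that $\Lambda$ charges only the set $\{s:v(s)=\delta\}$; this is precisely where Condition~A (local Lipschitz continuity of $\sigma$ on $(0,\infty)$) enters, exactly as in Le Gall's proof. Indeed $dL_0^{u-\tilde u}$ is carried by $\{u=\tilde u\}$, and on the part of that set with $v>\delta$ one has $x=\tilde x>\delta$; on any time interval during which $x,\tilde x$ remain in a fixed compact subset of $(\delta,\infty)$ the local Lipschitz bound gives $d\langle u-\tilde u\rangle_s\le C\,(u(s)-\tilde u(s))^2\,ds$, so by the occupation times formula $L_0^{u-\tilde u}(s_2)-L_0^{u-\tilde u}(s_1)\le\lim_{\varepsilon\downarrow0}\varepsilon^{-1}\int_{s_1}^{s_2}\mathds{1}_{\{0<u-\tilde u\le\varepsilon\}}C(u-\tilde u)^2\,ds=0$, i.e.\ $L_0^{u-\tilde u}$ stays flat there (localize by $\tau_N=\inf\{t:x(t)\vee\tilde x(t)\ge N\}$ and let $N\to\infty$). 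Since $dL^x_\delta$ and $dL^{\tilde x}_\delta$ are carried by $\{x=\delta\}\subseteq\{u=\delta\}$ and $\{\tilde x=\delta\}\subseteq\{\tilde u=\delta\}$ respectively, and $u,\tilde u\ge\delta$, these measures too are supported on $\{v=\delta\}$. Granting this, I would then take $f\in C^2_c([0,\infty))$ with $f'=f''=0$ on $[0,\delta_f]$, choose $\delta<\delta_f$, and apply It\^o's formula to $f(v)$: the $d\Lambda$-term vanishes because $\Lambda$ charges only $\{v=\delta\}$ and $f'(\delta)=0$; the factors $\mathds{1}_{(\delta,\infty)}(v)$ may be dropped because $f'$, $f''$ — hence $af'+\tfrac12\sigma^2f''$ and $\sigma f'$ — vanish on $[0,\delta]$; and $f(v)=f(\hat x)$, $a(v)f'(v)=a(\hat x)f'(\hat x)$, etc., since truncation at a level below $\delta_f$ is invisible to $f$ and its derivatives. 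The result is exactly (\ref{f_martingale}) for $f(\hat x)$, so $(\hat x,w)$ is a weak solution of (\ref{f_martingale}) with initial value $x_0$.

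It remains to invoke the hypothesis. Since $(\hat x,w)$ and $(x,w)$ are weak solutions with the common initial value $x_0$, weak uniqueness gives $\mathrm{Law}(\hat x)=\mathrm{Law}(x)$, hence $E f(\hat x(t))=E f(x(t))$ for every bounded $f\in C^2_c([0,\infty))$ and every $t$. For fixed $t$, taking $f$ non-decreasing, constant on $[0,\delta]$, strictly increasing on $[\delta,M]$ and constant on $[M,\infty)$, the non-negative random variable $f(\hat x(t))-f(x(t))$ has zero expectation, so it vanishes a.s.; this rules out $x(t)\le\delta<\hat x(t)\le M$ and forces $x(t)=\hat x(t)$ on $\{\delta<x(t)\le\hat x(t)\le M\}$. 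Letting $M\to\infty$ and then $\delta\downarrow0$ yields $\tilde x(t)\le x(t)$ a.s.\ for each $t$; by symmetry $x(t)=\tilde x(t)$ a.s., and continuity of paths upgrades this to $P\{x(t)=\tilde x(t)\text{ for all }t\ge0\}=1$, which is the pathwise uniqueness for (\ref{f_martingale}).
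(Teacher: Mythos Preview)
Your proof is correct and follows the same overall architecture as the paper's: show that $\hat x=x\vee\tilde x$ is again a weak solution of (\ref{f_martingale}) and then invoke weak uniqueness. The substantive difference is in how you establish that $L_0^{u-\tilde u}$ does not grow on the set where $v>\delta$. The paper argues by a dichotomy on each excursion interval $I_q$ of $\{(\zeta_\delta(x)\wedge\zeta_\delta(\tilde x))>\delta\}$, invoking the homeomorphism property of stochastic flows (Protter, Theorem~V--46): on $I_q$ the local time term in (\ref{eta_delta_x}) is inactive, the coefficients are Lipschitz, and the two solutions either coincide throughout $I_q$ or never meet there, so $L_0^{u-\tilde u}$ is flat on $I_q$ in both cases. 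You instead use the direct Le~Gall estimate: on $\{x>\delta,\ \tilde x>\delta\}$ the Lipschitz bound for $\sigma$ gives $d\langle u-\tilde u\rangle_s\le C_\delta^2(u-\tilde u)^2\,ds$, and the occupation-times identity forces $L_0^{u-\tilde u}$ to be flat there. Your route is more self-contained (no flow theorem needed) and is in fact the classical Le~Gall argument alluded to in the introduction; the paper's route is shorter once one is willing to quote the flow result. A further minor difference: in the final step the paper simply writes $E(x_1\vee x_2)(t)=E x_1(t)$, which tacitly assumes integrability of $x_i(t)$; your use of bounded monotone test functions $f\in C^2_c([0,\infty))$ avoids that issue and is the cleaner conclusion.
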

\begin{proof}
Let $(x_1(t))_{t\geq0}, (x_2(t))_{t\geq0}$ be processes defined on the same probability space $(\Omega,\mathfrak F,
\left( {\mathfrak F}_t\right), P)$ and let each of them is a weak solution to equation (\ref{f_martingale}).
The idea of the proof is as follows. We will see that the process
$((x_1\vee x_2)(t))_{t\geq0}$ is also a weak solution to equation (\ref{f_martingale}).

By Theorem IV-68 of \cite{Protter04} we have
\begin{multline}\label{vee_1}
(\zeta_\delta(x_1)\vee \zeta_\delta(x_2)) \left(t\right)=\zeta_\delta(x_1(t))+(\zeta_\delta(x_2(t))-\zeta_\delta(x_1(t))^+\\=
\zeta_\delta({x_0})+\int_0^t\mathds{1}_{\zeta_\delta(x_2(s))-\zeta_\delta(x_1(s))>0}d\zeta_\delta(x_2(s))
+\int_0^t\mathds{1}_{\zeta_\delta(x_2(s))-\zeta_\delta(x_1(s))\leq0}d\zeta_\delta(x_1(s))\\+
\frac{1}{2}L_0^{\zeta_\delta(x_1)-\zeta_\delta(x_2)}(t),
\end{multline}
where $L_0^{\zeta_\delta(x_1)-\zeta_\delta(x_2)}(t)$ is a local
time of the process
$(\zeta_\delta({x}_1(t))-\zeta_\delta({x}_2(t)))_{t\geq0}$ at $0$.
Then
\begin{multline}\label{max_Ito}
\left(\zeta_\delta(x_1)\vee\zeta_\delta(x_2)\right)(t)=\zeta_\delta({x_0})+\int_0^t a((x_1 \vee x_2)(s))\mathds{1}_{(\delta,+\infty)}((x_1\vee x_2)(s))ds
\\+
\int_0^t\sigma((x_1\vee x_2)(s))
\mathds{1}_{(\delta,+\infty)}((x_1\vee x_2)(s))dw(s)\\+
\frac{1}{2}L_\delta^{x_1\vee x_2}(t)+\frac{1}{2}L_0^{\zeta_\delta(x_1)-\zeta_\delta(x_2)}(t).
\end{multline}
Consider the last summand in the right-hand side of (\ref{max_Ito}).

The properties of the local time (cf. Theorem 69,
\cite{Protter04}, p. 214) implies that
$L_0^{\zeta_\delta(x_1)-\zeta_\delta(x_2)}(\cdot)$ increases only on
$\{t:\zeta_{\delta}(x_1(t))=\zeta_{\delta}(x_2(t))\}$. We prove that
increases only on $\{t:\zeta_\delta(x_1(t))=\zeta_\delta(x_2(t))=\delta\}$.

Let $q\in[0,\infty)\cap \mathds{Q}$ be such that
$\zeta_\delta(x_1(q))>\delta, \zeta_\delta(x_2(q))>\delta$.Define
\begin{eqnarray*}
a_q& = &\sup \{t<q:(\zeta_\delta(x_1)\wedge\zeta_\delta(x_2))(t)=\delta\},\\
b_q& = &\inf \{t>q:(\zeta_\delta(x_1)\wedge\zeta_\delta(x_2))(t)=\delta\},
\end{eqnarray*}
and $I_q=(a_q,b_q).$ Suppose $\zeta_\delta(x_1)(q)=\zeta_\delta(x_2)(q)$.
Then by Theorem on homeomorphisms of flows (cf. Theorem V-46,
\cite{Protter04}) applied to equation (\ref{eta_delta_x}), we have
$\zeta_\delta(x_1(t))=\zeta_\delta(x_2(t)), \ t\in[q, b_q)$. On the other
hand, if there exists $r<q$ such that
$\zeta_\delta(x_1(r))\neq\zeta_\delta(x_2(r)),$ by the same theorem
$\zeta_\delta(x_1(q))\neq\zeta_\delta(x_2(q))$. Thus
$\zeta_\delta(x_1(q))=\zeta_\delta(x_2(q))$ implies
$\zeta_\delta(x_1(t))=\zeta_\delta(x_2(t)), t\in I_q,$ and
$(\zeta_\delta(x_1)\vee \zeta_\delta(x_2))(t)=\zeta_\delta(x_1(t)), t\in I_q$.
Comparison (\ref{max_Ito}) with (\ref{eta_delta_x}) permits the
conclusion that $L_0^{\zeta_\delta(x_1)-\zeta_\delta(x_2)}(t)=0, t\in
I_q$. In the case of $\zeta_\delta(x_1(q))\neq\zeta_\delta(x_2(q))$ we get
$\zeta_\delta(x_1(t))\neq\zeta_\delta(x_2(t)), \ t\in I_q$. So for every
$[\alpha,\beta]\in I_q$ there exists $\varepsilon_0>0$ such that
$|\zeta_\delta(x_1(t))-\zeta_\delta(x_2(t))|>\varepsilon_0, t\in[\alpha,\beta].$
Then for all $\varepsilon\in[0,\varepsilon_0),\
\mathds{1}_{[0,\varepsilon]}|\zeta_\delta(x_2(t))-\zeta_\delta(x_1(t))|=0,\ t\in
[\alpha,\beta].$ From Corollary 3 of \cite{Protter04}, p.225 we
obtain
\begin{multline*}
L_0^{\zeta_\delta(x_1)-\zeta_\delta(x_2)}(t)\\=
\lim_{\varepsilon\downarrow0}\frac{1}{\varepsilon}\int_0^t\mathds{1}_{[0,\varepsilon]}|\zeta_\delta(x_2(s))-\zeta_\delta(x_1(s))|d\langle\zeta_\delta(x_1)-\zeta_\delta(x_2)\rangle
(s)=0,\ t\in [\alpha,\beta].
\end{multline*}

Therefore, $L_0^{\zeta_\delta(x_1)-\zeta_\delta(x_2)}(\cdot)$ can increases
only on $\{t:\ \zeta_\delta(x_1(t))=\zeta_\delta(x_2(t))=\delta\}$, i.e. on $\{t:\
(\zeta_\delta(x_1)\vee \zeta_\delta(x_2))(t)=\delta\}$.

Let $f\in C_c^2([0,\infty))$. Then there exists $\delta>0$  such that $f$ is constant on $[0,2\delta]$, and we have
$$
f\left((x_1\vee x_2)(t)\right)=f\left(({\zeta}_{\delta}(x_1)\vee{\zeta}_{\delta}(x_2))(t)\right), \ 0\leq t<+\infty.
$$

We have seen that the local times
$L_0^{\zeta_{\delta}(x_1)-{\zeta}_{\delta}(x_2)}(\cdot)$ and
$L_\delta^{x_1\vee x_2}(\cdot)$ do not increase on $\{t:(x_1\vee
x_2)(t)>2\delta\}$. Taking into account that $f'(x)=f''(x)=0$ on
$[0,2\delta]$ and making use of It$\hat{o}$ formula  we obtain

\begin{multline*}
f((x_1\vee x_2)(t))=f({x_0})+\int_0^t a((x_1\vee x_2)(s))f'((x_1\vee x_2)(s))ds\\+
\int_0^t \sigma((x_1\vee x_2)(s))f'((x_1\vee x_2)(s))dw(s)+
\frac{1}{2}\int_0^t
\sigma^2((x_1\vee x_2)(s))f''((x_1\vee x_2)(s))ds.
\end{multline*}

Therefore, the process $((x_1\vee x_2)(t))_{t\geq0}$ satisfies
equation (\ref{f_martingale}). By the weak uniqueness for all $t\geq0$,  $E^P (x_1\vee x_2)(t)=E^P x_1(t)=E^P x_2(t)$. This yields $(x_1\vee x_2)(t)=x_1(t)=x_2(t), \ t\geq0$ $P$-a.s.
\end{proof}
\begin{proof}[Proof of Theorem] Let for each ${x_0}\geq0$, there exists a solution to the martingale problem $M(a,\sigma,x_0)$.
Then by Lemma \ref{Prop well-posed} the weak uniqueness holds for
equation (\ref{f_martingale}). Then the assertion of Theorem
follows from Lemma \ref{pathwise_uniqueness} similarly to
Yamada-Watanabe theorem (cf. Theorem IV-1.1 of \cite{Ikeda+81}).
\end{proof}
\begin{Remark}
The statement of the Theorem holds true if $\sigma=0$  on some set
$B\subset(0,\infty)$. Indeed, let at first the set $B$ does not
have limit points in some neighborhood of 0. Suppose ${x_0}\in B$
and $a({x_0})=0$. Then a solution of equation (\ref{f_martingale})
stays at the point ${x_0}$ forever. Suppose $a({x_0})\neq0$. If
for some $y\in B$ such that $\ y<{x_0}$, $a(y)=0$, a solution
starting from ${x_0}$ never hits the point $y$ due to homeomorphic
property of solutions of SDE (see \cite{Kunita90}, Ch. 4.4). If
there exists $y\in B$ such that $y\leq{x_0}$ and $a(y)>0$, then a
solution of (\ref{f_martingale}) never attends the half-interval
$[0,y)$. In two last cases the assertion of the Theorem is
fulfilled because $a, \sigma$ are Lipshitz continuous on $[y,\infty)$
(see, for example \cite{Gikhman+68engl}).
 If for all $y\in B$ the inequality  $y>{{x_0}}$ holds, we need to prove the uniqueness only up to the time of hitting B by a solution.  The case when for all $y\in B$ such that $y\leq{{x_0}},$ we have $a(y)<0$ is reduced to the previous one. Thus, if the set $B$ does not have limit points in some neighborhood of $0$, then a solution to equation (\ref{f_martingale}) either attends a point of $B$ just once or does not attends it or lives in it forever. The assertion of Theorem holds true in this case.

Now, suppose that $0$ is a limit point of the set $B$. Suppose
there exists a subsequence $\{y_n:n\geq1\}\subset B$ such that
$y_n\to 0$ as $n\to\infty$ and $a(y_n)\geq0$. Then starting away
from $0$, a solution never hits some neighborhood of $0$ and the
assertion of Theorem holds true. If such a subsequence does not
exist, then there is a neighborhood of $0$, say $U$, such that for
all $y\in B\cap U$, $a(y)<0$. In this case, starting from $0$ a
solution  does not attend  the interval $(y,+\infty)$ for all
$y\in B\cap U$. Thus, if $0$ is a limit point of $B$, a solution
either hits $0$ in finite time with positive probability or does
not hit $0$ a.s.  In the former case a solution stays at the
point $0$ forever. But this contradicts with item $(iii)$ of
Definition \ref{mart_prob}. In the latter case the assertion of
Theorem is obvious.
\end{Remark}

\section{Examples}\label{Examples}
\begin{Example} [Skorokhod equation (\cite{Gikhman+68engl},  \S 23)]
Let $a, b$ be  functions on $[0,\infty)$. Let $(w(t))_{t\geq0}$ be
a Wiener process, ${{x_0}}\geq0$. Recall the definition of a
solution to Skorokhod problem.

Let $(x,l)$ be a pair of continuous processes adapted to the
filtration $(\overline{\mathfrak F}^w_t)$ and such that
\begin{enumerate}[(i)]
    \item $x$ is non-negative,
    \item $l(0)=0, l(\cdot)$ is nondecreasing,
    \item $l(\cdot)$ increases only at those moments of time
        when $x(t)=0$, i.e. for each $t\geq 0$,
        \begin{equation}\label{nosii}
        \int_0^t\mathds{1}_{\{0\}}(x(s))dl(s)=l(t),
        \end{equation}
    \item for each $t\geq0$, the relation
    \begin{equation}\label{Skorokhod}
x(t)={{x_0}}+\int_0^t a(x(s))ds+\int_0^t b(x(s))dw(s)+l(t)
\end{equation}
    holds and all the integrals in the right-hand side of  (\ref{Skorokhod})  are well-defined.

    Then the pair $(x,l)$ is called a strong solution to equation (\ref{Skorokhod}).
\end{enumerate}
If $(x,l)$ is such a solution, then for each $f\in C_c^2([0,\infty))$, by It$\hat{o}$ formula for semimartingales, we have
\begin{multline} \label{Ex.1Eq}
f(x(t))=f({{x_0}})+\int_0^t f'(x(s))b(x(s))dw(s)+\int_0^t f'(x(s))a(x(s))ds\\+
\frac{1}{2}\int_0^t f''(x(s))b^2(x(s))ds+\int_0^t f'(x(s))dl(s).
\end{multline}
According to (\ref{nosii}), the last member in the right-hand side of (\ref{Ex.1Eq}) is equal to 0. Thus, if the pair of the processes $(x,l)$ is a strong solution to equation (\ref{Skorokhod}), then  the process $x$ is a strong solution to equation (\ref{f_martingale}) in the sense of Definition \ref{Def_Str_Sol}.
\end{Example}

\begin{Example} [$\beta$-dimensional Bessel processes] Let $\rho$ be a Bessel process of dimension $\beta$.  It is known ( see \cite{Revuz+99}, p.446) that  this process has a transition probability density
$$
p_t^\beta(x,y)=t^{-1}(y/x)^\nu y  \exp{(-(x^2+y^2)/2t)}I_\nu(xy/t)\ \mbox{for} \  x>0, t>0,
$$
and
$$
p_t^\beta(0,y)=2^{-\nu}t^{-(\nu+1)}\Gamma^{-1}(\nu+1)y^{2\nu+1}\exp(-y^2/2t),
$$
where $\nu=\beta/2-1.$ If $0<\beta<2$ the point $0$ is instantaneously reflecting and for $\beta\geq 2$ it is polar.
\begin{enumerate}[1)]
\item Let $\beta>1$. In this case the process $\rho$ is a semimartingale, which satisfies the SDE of the form (see \cite{Revuz+99}, Ch.XI, \S1).
 \begin{equation}\label{bessel_eq_1}
 \rho(t)=\rho(0)+w(t)+\frac{\beta-1}{2}\int_0^t\frac{1}{\rho(s)}ds, \ \rho(0)\geq0.
 \end{equation}
  Cherny \cite{Cherny00} has shown that there exists a unique
  non-negative strong solution to equation
  (\ref{bessel_eq_1}). Let $\rho$ be a non-negative solution to
  (\ref{bessel_eq_1}). Applying It$\hat{o}$ formula we get the
  equation
  $$
  f(\rho(t))=f(\rho(0))+\int_0^t f'(\rho(s))dw(s)+\frac{\beta-1}{2}\int_0^t\frac{f'(\rho(s))}{\rho(s)}ds+\frac{1}{2}\int_0^t f''(\rho(s))ds.
  $$
  Thus, $(\rho,w)$ is a weak solution to equation (\ref{f_martingale}) in the sense of Definition \ref{Def_Weak_Sol}. Then according to Theorem there exists a strong solution to (\ref{f_martingale}) and the strong uniqueness holds. Therefore we obtain the result of Cherny from ours.
\item Let $0<\beta<1$. Let $(\rho(t))_{t\geq0}$ be a Bessel process on some probability space $(\Omega,\mathfrak{F},(\mathfrak{F}_t),P)$.
   Then the process $\rho$ is not a semimartingale. Nonetheless it has the family of local times defined by the formula
\begin{equation} \label{occupation times}
\int_0^t\phi(\rho(s))ds=\int_0^\infty \phi(x)L_x^\rho(t)x^{\beta-1}dx.
\end{equation}
valid  for all $t>0$ and for every  positive measurable function $\phi$ on $[0,\infty)$ a.s. The Bessel process of dimension $\beta$ is a weak solution to the following equation (cf. \cite{Revuz+99}, Ch.XI, Ex. 1.26)
\begin{equation}\label{bessel_eq}
\rho(t)=\rho(0)+w(t)+\frac{\beta-1}{2}k(t), \ \rho(0)\geq0,
\end{equation}
where $(w(t))_{t\geq0}$ is an $(\mathfrak{F}_t)$-Wiener process, $k(t)=V.P.\int_0^t\rho^{-1}(s)ds$ which, by definition, is equal to $\int_0^\infty a^{\beta-2}(L_a^\rho(t)-L_0^\rho(t))da$.

 Let us check that the pair $(\rho,w)$ is a weak solution to equation (\ref{f_martingale}) in the sense of Definition \ref{Def_Weak_Sol}. Then the  Theorem yields that  there exists a unique strong solution to equation (\ref{bessel_eq}). To prove this we need the following Lemma.
\begin{Lemma}
{\it Let $t_1, t_2\in\mathds{Q},\ t_1<t_2.$ Then for almost all $\omega\in\Omega$ such that $\rho(t,\omega)>0, \
t\in[t_1,t_2],$ the equality
\begin{equation}\label{k-k}
k(t_2)-k(t_1)=\int_{t_1}^{t_2}\frac{1}{\rho(s)}ds
\end{equation}
holds.}
\end{Lemma}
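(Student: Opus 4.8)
\noindent\emph{Proof plan.} The plan is to reduce the principal--value functional $k$ to an ordinary Lebesgue integral on a suitable exhausting sequence of events, using the occupation times formula (\ref{occupation times}). For $n\geq 1$ put
\[
E_n=\{\omega\in\Omega:\ \rho(t,\omega)>1/n\ \text{for all}\ t\in[t_1,t_2]\}.
\]
Since $\rho$ is continuous and $[t_1,t_2]$ is compact, the event $\{\omega:\ \rho(t,\omega)>0,\ t\in[t_1,t_2]\}$ is exactly $\bigcup_{n\geq1}E_n$; hence it suffices to establish (\ref{k-k}) almost surely on each $E_n$.

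\noindent First I would record two consequences of (\ref{occupation times}), each valid outside a fixed $P$-null set (recall $t_1,t_2$ are fixed rational times). Applying (\ref{occupation times}) at $t_2$ and at $t_1$ with $\phi=\mathds{1}_{[0,\varepsilon)}$, $\varepsilon\in\mathds{Q}\cap(0,\infty)$, and subtracting gives
\[
\int_{t_1}^{t_2}\mathds{1}_{[0,\varepsilon)}(\rho(s))\,ds=\int_0^{\varepsilon}\bigl(L_a^\rho(t_2)-L_a^\rho(t_1)\bigr)a^{\beta-1}\,da .
\]
On $E_n$, once $\varepsilon<1/n$ the left side vanishes, so $L_a^\rho(t_2)=L_a^\rho(t_1)$ for Lebesgue--a.e.\ $a\in[0,1/n)$; moreover, since the local time $L_0^\rho$ does not increase on a time set on which $\rho$ stays away from $0$ (a standard property of Bessel local times, cf.\ \cite{Revuz+99}, Ch.~XI), also $L_0^\rho(t_2)=L_0^\rho(t_1)$ on $E_n$. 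Secondly, applying (\ref{occupation times}) with the bounded function $\phi(x)=x^{-1}\mathds{1}_{(1/n,\infty)}(x)$ at $t_2$ and $t_1$ and subtracting,
\[
\int_{t_1}^{t_2}\mathds{1}_{(1/n,\infty)}(\rho(s))\,\frac{ds}{\rho(s)}=\int_{1/n}^{\infty}a^{\beta-2}\bigl(L_a^\rho(t_2)-L_a^\rho(t_1)\bigr)\,da ,
\]
and on $E_n$ the left side equals $\int_{t_1}^{t_2}\rho^{-1}(s)\,ds$.

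\noindent It then remains to match these with $k$. Because $k(t)$ is given by a convergent integral, $k(t_2)-k(t_1)=\int_0^{\infty}a^{\beta-2}\bigl[(L_a^\rho(t_2)-L_a^\rho(t_1))-(L_0^\rho(t_2)-L_0^\rho(t_1))\bigr]\,da$. On $E_n$ the second difference is $0$ by the first step, the bracket vanishes for a.e.\ $a\in(0,1/n)$ by the same step, and it also vanishes for $a>\sup_{[t_1,t_2]}\rho$ (which is finite by continuity); thus the integrand is a bounded function supported in a compact subinterval of $(0,\infty)$, and
\[
k(t_2)-k(t_1)=\int_{1/n}^{\infty}a^{\beta-2}\bigl(L_a^\rho(t_2)-L_a^\rho(t_1)\bigr)\,da=\int_{t_1}^{t_2}\frac{ds}{\rho(s)}
\]
on $E_n$. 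Discarding the union of the (countably many) null sets involved and taking the union over $n$ of the events $E_n$ yields (\ref{k-k}).

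\noindent The step I expect to be the main obstacle is the bookkeeping of null sets: (\ref{occupation times}) is an almost sure identity for each fixed $\phi$, whereas the naive argument would like to insert $\phi(x)=x^{-1}\mathds{1}_{(m,\infty)}(x)$ with the random level $m=\min_{[t_1,t_2]}\rho$. Replacing $m$ by the deterministic levels $1/n$ and exhausting $E$ by the $E_n$ avoids this, but one has to be careful that the chosen version of the family $\{L_a^\rho\}$ is additive in the time variable (so that $L_a^\rho(t_2)-L_a^\rho(t_1)$ is genuinely the occupation density over $[t_1,t_2]$) and that $L_0^\rho$ is constant on time intervals where $\rho$ is bounded away from $0$; both are standard for Bessel local times and should be invoked explicitly.
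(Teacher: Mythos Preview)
Your proof is correct and follows essentially the same route as the paper's: use that the local times $L_a^\rho$ do not increase over $[t_1,t_2]$ for small $a$, rewrite $k(t_2)-k(t_1)$ as an integral over $a$ bounded away from $0$, and apply the occupation times formula (\ref{occupation times}) with $\phi(x)=x^{-1}\mathds{1}_{[\varepsilon,\infty)}(x)$. The paper is terser---it takes a single random $\varepsilon=\min_{[t_1,t_2]}\rho$ and invokes ``properties of the local time'' to get $L_a^\rho(t_2)=L_a^\rho(t_1)$ for all $a<\varepsilon$ in one stroke---whereas your exhaustion by the deterministic events $E_n$ and your explicit bookkeeping of null sets is exactly the care needed to make that step rigorous.
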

\begin{proof}
There exists $\varepsilon>0$ such that $\rho(t)\geq \varepsilon, \ t\in[t_1,t_2]$. The properties of the local time imply  that for all $a<\varepsilon,$
$L_a^\rho(t_2)=L_a^\rho(t_1).$ Then
\begin{multline*}
k(t_2)-k(t_1)=\int_0^\infty a^{\beta-2}\left[(L_a^\rho(t_2)-L_0^\rho(t_2))-(L_a^\rho(t_1)-L_0^\rho(t_1))\right]da\\=
\int_0^\infty \frac{\mathds{1}_{[\varepsilon,\infty)}(a)}{a}a^{\beta-1}(L_a^\rho(t_2)-L_a^\rho(t_1))da.
\end{multline*}
By (\ref{occupation times}) we get
$$
k(t_2)-k(t_1)=\int_{t_1}^{t_2}\mathds{1}_{[\varepsilon,\infty)}(\rho(s))\frac{1}{\rho(s)}ds=\int_{t_1}^{t_2}\frac{1}{\rho(s)}ds.
$$
\end{proof}
Because of the continuity of the process $(\rho(t))_{t\geq0}$ there
exists $\tilde\Omega\in\Omega, \ P(\tilde\Omega)=1,$ such that
for all $\omega\in\tilde\Omega$, formula (\ref{k-k}) holds true
for all $t_1,t_2\geq 0$ satisfying $\rho(t)>0, t\in[t_1,t_2]$.

Let $\tau$ be a stopping
time such that $\rho(\tau)\neq 0$ P-a.s. Put $\sigma=\inf\{s\geq\tau:\
\rho(s)=0\}.$ We have
$$
\rho(t)=\rho(\tau)+w(t)-w(\tau)+\frac{\beta-1}{2}\int_\tau^t\frac{ds}{\rho(s)},\ t\in[\tau,\sigma).
$$

Let $f\in C_c^2([0,\infty)).$ It$\hat{o}$  formula for semimartingales yields
\begin{equation}\label{frt}
f(\rho(t))-f(\rho(\tau))=\int_\tau^t f'(\rho(s))dw(s)+\frac{\beta-1}{2}\int_\tau^t \frac{f'(\rho(s))}{\rho(s)}ds+\frac{1}{2}\int_\tau^t f''(\rho(s))ds, \ t\in[\tau,\sigma).
\end{equation}

Choose  $\delta>0$ such that   $f$ is constant on $[0,2\delta].$ Define
\begin{eqnarray*}
\tau_0&=&0,\\
\mbox{for}\ i\geq0,\ \varkappa_i&=&\inf\{t>\tau_i:\rho(t)=\delta/2\}, \\
\mbox {and for}\ i\geq 1,\
\tau_i&=&\inf\{t>\varkappa_{i-1}:\rho(t)=\delta\},\\
\end{eqnarray*}
Then
$$
f(\rho(t))=f(\rho(0))+\sum_{k=0}^\infty[f(\rho(\varkappa_k\wedge t))-f(\rho(\tau_k\wedge t))]+\sum_{k=0}^\infty[f(\rho(\tau_{k+1}\wedge t))-f(\rho(\varkappa_k\wedge t))].
$$
The second sum in the right-hand side is equal to zero. If
$f(\rho(0))<\delta/2$, then \break $f(\rho(\varkappa_0\wedge
t))-f(\rho(\tau_0))=0$.

Suppose $\rho(0)\geq\delta/2$. It follows from (\ref{frt}) that
\begin{multline}\label{frt2}
f(\rho(t))=f(\rho(0))+\sum_{k=0}^\infty\int_{\tau_k\wedge t}^{\varkappa_k\wedge t} f'(\rho(s))dw(s)+\frac{\beta-1}{2}\sum_{k=0}^\infty\int_{\tau_k\wedge t}^{\varkappa_k\wedge t} \frac{f'(\rho(s))}{\rho(s)}ds\\+
\frac{1}{2}\sum_{k=0}^\infty\int_{\tau_k\wedge t}^{\varkappa_k\wedge t} f''(\rho(s))ds, \ t\geq 0.
\end{multline}
Note that  for all $t\in[\varkappa_k,\tau_{k+1}], \ k\geq0$, $f'(\rho(t))=f''(\rho(t))=0$. Then  (\ref{frt2}) can be rewritten in the form
\begin{equation}\label{frt3}
f(\rho(t))=f(\rho(0))+\int_0^t f'(\rho(s))dw(s)+\frac{\beta-1}{2}\int_0^t \frac{f'(\rho(s))}{\rho(s)}ds+\frac{1}{2}\int_0^t f''(\rho(s))ds, \ t\geq 0.
\end{equation}
If $\rho(0)<\delta/2$ equation (\ref{frt3}) can be obtained similarly.

Hence, the pair $(\rho,w)$ is a weak solution to equation
(\ref{f_martingale}) and, consequently, the strong existence
and uniqueness hold for equation (\ref{bessel_eq}).
\end{enumerate}
\end{Example}

\begin{Example}
Let the process $(x(t))_{t\geq0}$ be a weak solution to an SDE of the form
\begin{equation}\label{Girsanov1}
x(t)=x(0)+\int_0^t a(|x(s)|)ds+\int_0^t b(|x(s)|)dw(s),
\end{equation}
where the coefficients $a$ and $b$ are locally Lipshitz continuous
on $(0,\infty)$.

Then for each even function being constant in a neighborhood of
zero according to It$\hat{o}$ formula, we get
\begin{multline}\label{Girsanov}
f(x(t))=f(x(0))+\int_0^t a(|x(s)|)f'(x(s))ds+\int_0^t
b(|x(s)|)f'(x(s))dw(s)\\
+\frac{1}{2}\int_0^t b^2(|x(s)|)f''(x(s))ds.
\end{multline}
Note that if the process $(x(t))_{t\geq0}$ is a weak solution to
equation (\ref{Girsanov1}) spending zero time at the origin  then
the process $y(t)=|x(t)|, \ t\geq0,$ satisfies equality
(\ref{Girsanov}) for each  $f\in C_c^2([0,+\infty))$. By Theorem 1
the process $y(t),\ t\geq 0,$ is a unique non-negative strong
solution  to (\ref{Girsanov}) spending zero time at the origin.

Consider an SDE which can be regarded as an example of equation of
the form (\ref{Girsanov1})
\begin{equation}\label{Girsanov_equation}
x(t)=x(0)+\int_0^t |x(s)|^\alpha dw(s), \ \alpha\in(0,1/2).
\end{equation}
It is known that there exists a weak solution to (\ref{Girsanov_equation})
spending zero time at the point $0$  (cf. \cite{McKean69}, 3.10b).
\begin{Remark}
Girsanov \cite{Girsanov62} has shown that without additional assumption this equation has infinitely many weak solutions.
\end{Remark}
\begin{Remark}
It can be proved (cf. \cite{Bass+07}) that in the class of solutions spending zero time at the point $0$ the pathwise uniqueness holds and a strong solution exists.
\end{Remark}

So, there exists a weak solution to the equation
\begin{equation}\label{Girsanov_equation2}
f(x(t))=f(x(0))+\int_0^t(x(s))^\alpha f'(x(s))dw(s)+\frac{1}{2}\int_0^t(x(t))^{2\alpha}f''(x(s))ds
\end{equation}
in the sense of Definition 3 spending zero time at the point $0$. According to Theorem 1 there is a strong solution to (\ref{Girsanov_equation2}) spending zero time at the point $0$. Certainly, this solution coincides with the unique strong solution to the equation
$$
x(t)=x(0)+\int_0^t(x(s))^\alpha dw(s)+ dL_{0}^x(t), \ \alpha\in(0,1/2),
$$
spending zero time at the point $0$ which was constructed by Bass and Chen (see \cite{Bass+05}). Here $(L_{0}^x(t))_{t\geq0}$ is a local time of the process $(x(t))_{t\geq0}$ at the point 0 defined by formula (\ref{local time}).
\end{Example}

\section*{Appendix}
\begin{proof}[Proof of assertion of Remark 2]
The "only if" assertion is trivial.

To prove the "if" assertion we can argue as in Prop.2.1, Ch.IV of
\cite{Ikeda+81}. Suppose $P$ is a solution to the martingale
problem $M(a,\sigma,{{x_0}})$ on space
$(C^+([0,+\infty)),\mathfrak{G}),(\mathfrak{G}_t))$, and $f\in
C_c^2([0,\infty))$. Then the process $Y_f(t)$ is a continuous,
square integrable local martingale with respect to $P$. Applying
condition (ii) of Definition \ref{mart_prob} to the function
$f^2$, we calculate the characteristics of the process
$(Y_f(t))_{t\geq0}$. Namely,
$$
\langle Y_f\rangle(t)=\int_0^t\sigma^2(x(s))(f'(x(s)))^2ds.
$$
Consequently,
there is a Brownian motion $(w_f(t))_{t\geq0}$ defined on an
extension  of \break $(C^+([0,\infty)),\mathfrak{G},(\mathfrak{G}_t),P)$ such that
$$
Y_f(t)=\int_0^t\sigma(x(s))f'(x(s))dw_f(s).
$$
We will show that it can be chosen the same Brownian motion for
all $f\in C_c^2([0,\infty))$.

Similarly to the Proof of Lemma 1, for $k=1,2,\dots$, consider a non-decreasing function $\eta_k\in
C_c^2([0,\infty))$
 such that $\eta_k(x)=x, \ x>1/k,$ and $\eta_k$ is a constant on $[0,\frac{1}{2k}]$.

Let us fix $k$ and put
$$
\tau_l=\inf\{t:\eta_k(x(t))>l\}, \ l=1,2,\dots.
$$
Then, for all $l=1,2,\dots,$
$$
\eta_k(x(t\wedge\tau_l))-\eta_k({{x_0}})-\int_0^{t\wedge\tau_l}\left[a(x(s))\eta_k'(x(s))+\frac{1}{2}\sigma^2(x(s))\eta_k''(x(s))\right]ds
$$
is a continuous, square integrable $P$-martingale. Then
$Y_{\eta_k}(t)\in\mathcal{M}_2^{c,loc}(P)$, and there exists a
Brownian motion $(w_k(t))_{t\geq0}$ on an extension $(\Omega_k,
\mathfrak{F}_k, P_k )$ of
$(C^+([0,\infty)),\mathfrak{G},(\mathfrak{G}_t),P)$ such that
\begin{multline} \label{eta_martingale}
    \eta_k(x(t))=\eta_k(x_0)+\int_0^t\left(a(x(s))\eta_k'(x(s))+\frac{1}{2}\sigma^2(x(s))\eta_k''(x(s))\right)ds\\+
    \int_0^t \sigma(x(s))\eta_k'(x(s))dw_k(s).
\end{multline}
Fix $m\geq1$. Then for all $k\geq m$, $\eta_m(x)=\eta_k(x),\ x>1/m.$ Put
\begin{equation}\label{tilde_w_m}
\tilde{w}_m(t):=\int_0^t\mathds{1}_{(\frac{1}{m},+\infty)}(x(s))dw_m(s).
\end{equation}
As a consequence of the following simple Lemma we have that for each $m\geq1$ the process $(\tilde{w}_m(t))_{t\geq0}$ is adapted w.r.t. the filtration generated by the process $(x(t))_{t\geq0}$ and for all $k\geq m$,
\begin{equation}\label{w_m_equality}
\int_0^t\mathds{1}_{(\frac{1}{m},+\infty)}(x(s))d w_k(s)=\int_0^t\mathds{1}_{(\frac{1}{m},+\infty)}(x(s))dw_m(s)=\int_0^t\mathds{1}_{(\frac{1}{m},+\infty)}(x(s))d \tilde w_m(s) \ \mbox{ a.s.}
\end{equation}

\begin{Lemma}\label{Wiener_equality} {\it Let $A$ be an open set in $\mathds{R}$.
Let $ x_0\in A$, $(x(t))_{t\geq0}$ be a continuous
adapted process on a probability space $(\Omega,\mathcal{F}_t,P)$.
Let $(w(t))_{t\geq0}$ be a Wiener process on some extension of the
space $(\Omega,\mathcal{F}_t,P)$. Suppose $a,b,f$ are continuous
functions on $\mathbb{R}$, $b(x)\neq0$ for $x\in A$, and for all
$t\geq0$, the equality
$$
f(x(t))=f(x_0)+\int_0^t a(x(s))ds+\int_0^t b(x(s))dw(s)
$$
holds. Put $\mathcal{F}_t^x=\sigma\{x(s):0\leq s\leq t\}$. Then
the process $\int_0^t\mathds{1}_{A}(x(s))dw(s),
t\geq 0$, is adapted w.r.t. $(\mathcal{F}_t^x)$.

Moreover, suppose $(\bar{w}(t))_{t\geq0}$  is a Wiener process on
an extension of the probability space $(\Omega,\mathcal{F}_t,P)$,
$\bar{a},\bar{b},\bar{f}$ are continuous functions on $\Bbb R$,
$\bar{b}(x)\neq0$ for $x\in A$, and the equality
$$
\bar f(x(t))=\bar f(x_0)+\int_0^t \bar a(x(s))ds+\int_0^t \bar
b(x(s))d\bar w(s)
$$
holds.

If  $a(x)=\bar{a}(x), b(x)=\bar{b}(x), f(x)=\bar{f}(x)$ on
$A$, then
\begin{equation}\label{equality}
\int_0^t\mathds{1}_{A}(x(s))dw(s)=\int_0^t\mathds{1}_{A}(x(s))d\bar w(s).
\end{equation}}
\end{Lemma}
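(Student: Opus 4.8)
The plan is to invert the given equation on $A$. Put
$$
M(t):=f(x(t))-f(x_0)-\int_0^t a(x(s))\,ds=\int_0^t b(x(s))\,dw(s),
$$
a continuous local martingale with $\langle M\rangle(t)=\int_0^t b^2(x(s))\,ds$. Since $M$ is a functional of the path of $x$ alone, it is $(\mathcal{F}_t^x)$-adapted. Because $b\neq 0$ on $A$ and $\int_0^t\mathds{1}_A(x(s))b^{-2}(x(s))\,d\langle M\rangle(s)=\int_0^t\mathds{1}_A(x(s))\,ds\le t<\infty$, the process $H(s):=\mathds{1}_A(x(s))/b(x(s))$ is integrable with respect to $M$, and associativity of the stochastic integral gives
$$
\int_0^t\mathds{1}_A(x(s))\,dw(s)=\int_0^t H(s)\,dM(s),\qquad t\ge 0 .
$$

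To obtain the first assertion it now suffices to show that $\int_0^{\cdot}H\,dM$ is $(\mathcal{F}_t^x)$-adapted. Localizing by $T_n=\inf\{t:|M(t)|\ge n\}$, which are $(\mathcal{F}_t^x)$-stopping times since $M$ is continuous and $(\mathcal{F}_t^x)$-adapted, and conditioning the $(\mathcal{F}_t)$-martingale property of the bounded martingale $M^{T_n}$ on $\mathcal{F}_s^x\subset\mathcal{F}_s$, one sees that $M$ is a continuous local $(\mathcal{F}_t^x)$-martingale as well. As $H$ is $(\mathcal{F}_t^x)$-progressive and the stochastic integral of a progressive integrand against a continuous local martingale is unchanged when the filtration is shrunk to one for which the integrand stays progressive and the integrator stays a local martingale, $\int_0^{\cdot}H\,dM$ is $(\mathcal{F}_t^x)$-adapted. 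This proves the first part.

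For the equality \eqref{equality}, set $\bar M(t)=\int_0^t\bar b(x(s))\,d\bar w(s)$ and apply the displayed identity to both triples; since $b=\bar b$ on $A$,
$$
\int_0^t\mathds{1}_A(x(s))\,dw(s)-\int_0^t\mathds{1}_A(x(s))\,d\bar w(s)=\int_0^t\frac{\mathds{1}_A(x(s))}{b(x(s))}\,dN(s),\qquad N:=M-\bar M .
$$
Subtracting the two hypotheses, $g(t):=f(x(t))-\bar f(x(t))$ is a continuous semimartingale whose finite-variation part is $\int_0^t(a-\bar a)(x(s))\,ds$ and whose continuous local martingale part is exactly $N$, so $\langle N\rangle=\langle g\rangle$. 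Since $f=\bar f$ on $A$, $x(s)\in A$ forces $g(s)=0$, hence $\mathds{1}_A(x(s))=\mathds{1}_A(x(s))\mathds{1}_{\{g(s)=0\}}$ for all $s$. Therefore the continuous local martingale on the right-hand side has quadratic variation $\int_0^t\mathds{1}_A(x(s))b^{-2}(x(s))\mathds{1}_{\{g(s)=0\}}\,d\langle g\rangle(s)$, and the occupation times formula (cf.\ \cite{Protter04}) gives $\int_0^t\mathds{1}_{\{g(s)=0\}}\,d\langle g\rangle(s)=\int_{\mathbb{R}}\mathds{1}_{\{0\}}(a)L_a^g(t)\,da=0$; hence this quadratic variation vanishes, the local martingale is identically zero, and \eqref{equality} follows.

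The step that needs the most care is the invariance under shrinking the filtration — both of the local-martingale property of $M$ and of the value of the integral $\int_0^\cdot H\,dM$. Because $\mathds{1}_A(x(\cdot))$ is only measurable, not continuous, this is cleanest done through $L^2(d\langle M\rangle\times dP)$-approximation of $H$ by $(\mathcal{F}_t^x)$-adapted simple processes rather than through Riemann sums. Everything else — integrability of $H$ against $M$ and against $\bar M$, the identification of the continuous local martingale part of $g$ with $N$, and the occupation-times computation — is routine.
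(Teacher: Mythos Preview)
The paper declares the proof ``trivial'' and supplies no argument, so there is nothing detailed to compare against. Your write-up is a correct fleshing-out of what the authors presumably had in mind: recover $\int_0^\cdot\mathds{1}_A(x(s))\,dw(s)$ from the $(\mathcal{F}_t^x)$-adapted local martingale $M$ by integrating $H=\mathds{1}_A(x)/b(x)$, and observe that this expression depends only on the path of $x$.

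For the second assertion your occupation-times argument works, but it is more elaborate than needed. A shorter route, and likely the one the authors considered ``trivial'', is this: since $f=\bar f$ and $a=\bar a$ on $A$, on every open subinterval of $\{s:x(s)\in A\}$ the increments of $M$ and of $\bar M$ coincide (both equal $f(x(t))-f(x(s))-\int_s^t a(x(r))\,dr$), so $N=M-\bar M$ is constant there and hence $d\langle N\rangle$ does not charge $\{s:x(s)\in A\}$. It follows immediately that
\[
\Bigl\langle \int_0^\cdot \frac{\mathds{1}_A(x(s))}{b(x(s))}\,dN(s)\Bigr\rangle(t)=\int_0^t \frac{\mathds{1}_A(x(s))}{b^2(x(s))}\,d\langle N\rangle(s)=0,
\]
which gives \eqref{equality} without invoking local times of $g$. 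Either way the conclusion is the same; your version is simply a slightly heavier hammer.
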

The proof is trivial.

The sequence $\{\tilde{w}_m:\ m\geq1\}$ defined in (\ref{tilde_w_m}) is fundamental in mean square on compact intervals. Indeed, for $k\geq m, T>0$, using martingale inequality (cf. \cite{Ikeda+81}, Theorem I-6.10) and (\ref{w_m_equality}), we get
$$
E\left[\sup_{t\in[0,T]}|\tilde w_k(t)-\tilde w_m(t)|\right]^2\leq 4E|\tilde w_k(T)-\tilde w_m(T)|^2=
$$
$$
4E\left[\int_0^T\left(\mathds{1}_{(\frac{1}{k}, +\infty)}-\mathds{1}_{(\frac{1}{m}, +\infty)}\right)(x(s))dw_k(s)\right]^2\leq 4E\int_0^T \mathds{1}_{(\frac{1}{k},\frac{1}{m}]}(x(s))ds\to 0 , m\to +\infty.
$$
Then the sequence $\{\tilde w_m: \ m\geq1\}$ is uniformly
convergent on compact intervals in probability. Denote the limit
of the sequence $\{\tilde w_m: \ m\geq1\}$ by $\tilde w.$

The process $(\tilde{w}(t))_{t\geq0}\in\mathcal{M}_2^{c,loc}(P)$ and
$$
\langle\tilde{w}(t)\rangle(t)=\int_0^t\mathds{1}_{(0,\infty)}(x(s))ds=t.
$$
Here we used the fact that the process $(x(t))_{t\geq0}$ spends
zero time at the point $0$. Thus the process
$(\tilde{w}(t))_{t\geq0}$ is a Wiener process. Besides, by
construction,
$$
\tilde{w}_k(t)=\int_0^t\mathds{1}_{(\frac{1}{k},+\infty)}(x(s))d\tilde{w}(s).
$$
Let $f\in C_c^2([0,\infty))$ be such that $f$ is constant on $[0,1/k]$. Then there exists a Wiener process $(w_f(t))_{t\geq0}$ such that
\begin{multline}\label{f_Wiener}
   f(x(t))=f(x_0)+\int_0^t\left(a(x(s))f'(x(s))+\frac{1}{2}\sigma^2(x(s))f''(x(s))\right)ds
   \\+
    \int_0^t \sigma(x(s))f'(x(s))d{w_f}(s).
\end{multline}
By It$\hat{o}$ formula,  (\ref{eta_martingale}) yields
\begin{multline}\label{f_eta_k}
   f(\eta_k(x(t)))=f(\eta_k(x_0))+\int_0^t a(x(s))\eta_k'(x(s))f'(\eta_k(x(s)))ds\\+
   \frac{1}{2}\int_0^t\sigma^2(x(s))\eta_k''(x(s))f'(\eta_k(x(s)))ds\\+
   \int_0^t \sigma(x(s))\eta_k'(x(s))f'(\eta_k(x(s)))d{w_k}(s)+\frac{1}{2}\int_0^t\sigma^2(x(s))(\eta_k'(x(s)))^2f''(\eta_k(x(s)))ds.
\end{multline}
The second integral in the right-hand side  of (\ref{f_eta_k}) is equal to $0$ because $\eta_k''(x)=0$ on $(1/k,+\infty)$. Taking into account that $\eta_k'(x)=x'=1$ on $(1/k,+\infty)$, and $f(\eta_k(x))=f(x)$ on $(1/k,+\infty)$, we arrive at the equation
\begin{multline}\label{f_eta_k_2}
f(\eta_k(x(t)))=f(\eta_k(x_0))+\int_0^t a(x(s))f'(x(s))ds+\int_0^t\sigma(x(s))f'(x(s))dw_k(s)\\+\frac{1}{2}\int_0^t \sigma^2(x(s))f''(x(s))ds.
\end{multline}
Note that
$$
\int_0^t\sigma(x(s))f'(x(s))dw_k(s)=\int_0^t\sigma(x(s))f'(x(s))\mathds{1}_{\{\sigma(x(s))f'(x(s))\neq0\}}dw_k(s).
$$
Applying Lemma \ref{Wiener_equality} to  equations (\ref{f_Wiener}) and (\ref{f_eta_k_2}) we have
\begin{multline*}
\int_0^t\mathds{1}_{(1/k,+\infty)}(x(s))\mathds{1}_{\{\sigma(x(s))f'(x(s))\neq0\}}dw_f(s)\\=\int_0^t\mathds{1}_{(1/k,+\infty)}(x(s))\mathds{1}_{\{\sigma(x(s))f'(x(s))\neq0\}}dw_k(s)\\
=\int_0^t\mathds{1}_{(1/k,+\infty)}(x(s))\mathds{1}_{\{\sigma(x(s))f'(x(s))\neq0\}}d\tilde w(s).
\end{multline*}
So for each $f\in C_c^2([0,+\infty))$ the equality
\begin{multline*}
   f(x(t))=f(x_0)+\int_0^t\left(a(x(s))f'(x(s))+\frac{1}{2}\sigma^2(x(s))f''(x(s))\right)ds\\+
    \int_0^t \sigma(x(s))f'(x(s))d\tilde w(s)
\end{multline*}
is justified, and the pair
$(x(t),\tilde w(t))_{t\geq0}$ is a weak solution to equation
(\ref{f_martingale}).
\end{proof}



\begin{thebibliography}{10}

\bibitem{Cherny+05}
A.~S. Cherny and H.-J. Engelbert.
\newblock {\em Singular stochastic differential equations}.
\newblock Springer-Verlag, Berlin, 2005.

\bibitem{Cherny00}
A.S. Cherny.
\newblock On the strong and weak solutions of stochastic differential equations
  governing bessel processes.
\newblock {\em Stochastics and Stochastics Reports}, 70(3):213--219, 2000.

\bibitem{Bass+07}
R.~F.Bass, K.~Burdzy, and Z.-Q. Chen.
\newblock Pathwise uniqueness for a degenerate stochastic differential
  equation.
\newblock {\em The Annals of Probability}, 35(6):2385--2418, 2007.

\bibitem{Bass+05}
R.~F.Bass and Z.-Q. Chen.
\newblock One-dimensional stochastic differential equations with singular and
  degenerate coefficients.
\newblock {\em Sankhya : The Indian Journal of Statistics}, 67:19--45, 2005.

\bibitem{LeGall83}
J.~F.~Le Gall.
\newblock Applications des temps local aux equations differentielles
  stochastiques unidimensionnelles.
\newblock {\em Lecture Notes in Math.}, 986:15--31, 1983.

\bibitem{Girsanov62}
I.~V. Girsanov.
\newblock An example of non-uniqueness of the solution of the stochastic
  equation of k. ito.
\newblock {\em Theory of Probability and its Applications}, 7:325--331, 1962.

\bibitem{Gikhman+68engl}
I.I.Gikhman and A.V.Skorokhod.
\newblock {\em Stochastic differential equations}.
\newblock Naukova Dumka, Kiev, 1968 (Russian).
\newblock Translated from the Russian to the English by K. Wickwire. Springer-Verlag, Heidelberg-New York, 1972.
\bibitem{Ikeda+81}
N.~Ikeda and S.~Watanabe.
\newblock {\em Stochastic differential equations and diffusion processes}.
\newblock Kodansha ltd., Tokyo, 1981.

\bibitem{Kunita90}
H.~Kunita.
\newblock {\em Stochastic Flows and Stochastic Differential Equations}.
\newblock Cambridge Univ. Press, 1990.

\bibitem{McKean69}
H.~P. McKean.
\newblock {\em Stochastic integrals}.
\newblock Academic Press, New York, London, 1969.

\bibitem{Protter04}
P.E. Protter.
\newblock {\em Stochastic integration and differential equations}.
\newblock Springer-Verlag, Berlin, 2004.

\bibitem{Ramanan06}
K.~Ramanan.
\newblock Reflected diffusions defined via extended {S}korokhod map.
\newblock {\em Electronic journal of probability}, 11:934--992, 2006.

\bibitem{Revuz+99}
D.~Revuz and M.~Yor.
\newblock {\em Continuous martingales and Brownian motion}.
\newblock Springer-Verlag, Berlin, 1999.

\bibitem{Strook+79}
D.~W. Stroock and S.~R.~S. Varadhan.
\newblock {\em Multidimensional diffusion processes}.
\newblock Springer-Verlag, Berlin, New York, 1979.

\bibitem{Varadhan+85}
S.R.S. Varadhan and R.J. Williams.
\newblock Brownian motion in a wedge with oblique reflection.
\newblock {\em Comm. Pure Appl. Math.}, 38:405--443, 1985.

\end{thebibliography}

\end{document}